\documentclass[11pt,a4paper,oneside,lualatex]{amsart}
\usepackage[marginparwidth=0pt,margin=25truemm]{geometry} 



\usepackage{mypackage} 
\usepackage{mycommand} 

\DeclareMathOperator{\val}{val}


\usepackage[luatex, pdfencoding=auto,hypertexnames=false]{hyperref}
\hypersetup{colorlinks=false}


\numberwithin{equation}{section} 
\usepackage{mytheoremeng} 

\begin{document}

\title[Extended-cycle integrals]{Extended-cycle integrals of modular functions for badly approximable numbers}

\author[Y. Murakami]{Yuya Murakami}
\address{Mathematical Inst. Tohoku Univ., 6-3, Aoba, Aramaki, Aoba-Ku, Sendai 980-8578, JAPAN}
\email{yuya.murakami.s8@dc.tohoku.ac.jp}

\thanks{The author is supported by JSPS KAKENHI Grant Number JP 20J20308.	}

\date{\today}

\maketitle


\begin{abstract}
	Cycle integrals of modular functions are expected to play a role in real quadratic analogue of singular moduli.
	In this paper, we extend the definition of cycle integrals of modular functions from real quadratic numbers to badly approximable numbers.
	We also give explicit representations of values of extended-cycle integrals for some cases.
\end{abstract}

\tableofcontents


\section{Introduction} \label{sec:intro}


The elliptic modular $ j $-function $ j(z) $ is an $ \SL_2(\Z) $-invariant holomorphic function on the upper half-plane $ \bbH := \{ z=x+y\iu \in \bbC \mid x, y \in \R, y>0 \} $.
It plays an essential role in the complex multiplication theory.
Indeed, its special values at imaginary quadratic points generate the Hilbert class fields of imaginary quadratic fields.
On the other hand, for real quadratic fields, any definitive construction of the Hilbert class fields or ray class fields is still unknown except for a few cases, that is \cite{Shimura}, \cite{Shintani}.
With this motivation, Kaneko~\cite{Kan} introduced  the ``value" of $ j(z) $, written $ \val(w) $ derived from Dedekind's notation, at any real quadratic number $ w $ which is defined by
\begin{equation} \label{eq:def_val}
	\val(w) := \frac{1}{2 \log \veps_{w}}
	\int_{z_0}^{\gamma_w z_0} j(z) \bigg( \dfrac{1}{z - w'} - \dfrac{1}{z - w} \bigg) dz,
\end{equation}
where  $ z_0 \in  \bbH $ is any point, $ w' $ is the non-trivial Galois conjugate of $ w $,
$ \SL_2(\Z)_w $ is the stabilizer of $ w $ in $ \SL_2(\Z) $, and
$ \gamma_w = \begin{pmatrix} * & * \\ c & d \end{pmatrix} $ is the unique element of $ \SL_2(\Z)_w $ such that 
$ \SL_2(\Z)_w = \{ \pm \gamma_w^n \mid n \in \Z \} $ and $ \veps_w := cw + d > 1 $.
The integral is independent of $ z_0 $ since the integrand is holomorphic and invariant under $ \gamma_w $.
We can prove that $  \veps_{w} $ is the minimal unit greater than $ 1 $ with the norm $ 1 $ in the order $ \calO_{w} $ in the real quadratic field $ \Q(w) $ whose discriminant coincides with the discriminant of $ w $.

This value can be written as $ \val(w) = \widetilde{\val}(w) / \widetilde{1}(w)$, where
\[
\widetilde{\val}(w) :=  \int_{\SL_2(\Z)_w \backslash S_{w', w}} j ds, \quad
\widetilde{1}(w) := \int_{\SL_2(\Z)_w \backslash S_{w', w}} ds = 2 \log \veps_{w}
\]
are cycle integrals, $ ds := y^{-1} \sqrt{dx^2 + dy^2} $, and $ S_{w', w} $ is the geodesic in $ \bbH $ from $ w' $ to $ w $.

In the last decade, it turned out that the values of $ \widetilde{\val} $ have interesting properties.
They are related to a mock modular form of weight $ 1/2 $ for $ \Gamma_0(4) $ studied by Duke--Imamoglu--T\'{o}th~\cite{DIT}, which is a real quadratic analogue of Zagier's work on traces of singular moduli in~\cite{Zag}.
They are also related to a locally harmonic Maass form of weight $ 2 $ studied by Matsusaka~\cite[Theorem 1.1]{Matsusaka_Rade}.
Bengoechea--Imamoglu~\cite[Theorem 1.1]{BI} showed some kind of continuity for the values of $ \val $ at Markov quadratics which is conjectured by Kaneko~\cite{Kan}.
The author~\cite[Theorem 1.1]{Mura} generalized it to the values of $ \val $ at real quadratic numbers whose periods of continued fraction expansions have long cyclic parts. It revealed that the continuity of $ \val $ differs from Euclidean topology.
However, the definition of the continuity of $ \val $ is still missing.
We would expect a topological space $ H $ containing all real quadratic numbers and a continuous function $ ? \colon H \to \bbC $ such that the diagram
\begin{equation}
	\begin{tikzcd}
		H \ar[rd, "?"] \\
		\{ \text{real quadratic numbers} \} \ar[r, "\val" '] \ar[u, hook] & \bbC
	\end{tikzcd}
\end{equation}
commutes.

In this paper, we attempt to choose $ H $ as a set of badly approximable numbers and give some partial continuity of $ \val $ on $ H $.
Here, badly approximable numbers are defined as irrational numbers $ x $ whose regular continued fraction expansions
\[
x = [k_1, k_2, k_3, \dots] := k_1 + \cfrac{1}{k_2 + {\cfrac{1}{k_3 + \cfrac{1}{\ddots} } } }, \quad
k_1 \in \Z, \quad
k_2, k_3, \dots \in \Z_{>0}
\]
have bounded coefficients, that is, the set $ \{ k_1, k_2, k_3, \dots \} $ is bounded.
For example, real quadratic irrational numbers are badly approximable since they have periodic continued fraction expansions.

We define $ \val(x) $ and related values for a badly approximable number $ x = [k_1, k_2, k_3, \dots] $ as
\begin{align}
	\val(x) &:= \lim_{
		\substack{ z \in S_{z_0, x} \\
			z \to x}}
	\frac{1}{d_{\mathrm{hyp}}(z_0, z)}
	\int_{S_{z_0, z}} j ds, 
	\label{eq:val_new_intro1} \\
	\widehat{\val}(x) &:=
	\lim_{n \to \infty} \frac{1}{n}
	\int_{z_0}^{\gamma_{n}^{} z_0} j(z) \frac{-1}{z-x} dz, 
	\label{eq:val_new_intro2}\\
	\widehat{1}(x) &:=
	\lim_{n \to \infty} \frac{1}{n}
	\int_{z_0}^{\gamma_{n}^{} z_0} \frac{-1}{z-x} dz,
	\label{eq:val_new_intro3} \\
	\widehat{\veps}_x &:= \lim_{n \to \infty} c_n^{1/n},
	\label{eq:val_new_intro4}
\end{align}
where $ z_0 \in \bbH $, $ S_{z_0, z}  $ is the geodesic in $ \bbH $ from $ z_0 $ to $ z \in \bbH \cup \R $, 
\[
d_{\mathrm{hyp}}(z_0, z) := \int_{S_{z_0, z}} ds, \quad
\gamma_n := 
\begin{pmatrix} k_1 & 1 \\ 1 & 0 \end{pmatrix} 
\begin{pmatrix} k_2 & 1 \\ 1 & 0 \end{pmatrix}
\dots 
\begin{pmatrix} k_{2n} & 1 \\ 1 & 0 \end{pmatrix}
\in \SL_2(\Z),
\]
and $ c_n \in \Z_{>0} $ be the denominator of the rational number $ [k_1, k_2, \cdots k_{2n}] $.
In \cref{sec:fund} later, we will introduce more general representations of these values.

We will prove that these limit values are bounded, but we do not know if the limit values are singleton. Hence, the right-hand sides of the above equations do converge for any badly approximable number.
We can also prove that if $ x $ is a real quadratic number, then the limits $ \val(x), \widehat{\val}(x), \widehat{1}(x) $, and $ \widehat{\veps}_x $ converge, $ \val(x) $ coincides with the values defined in \cref{eq:def_val}, and we can write
\[
\widehat{\val}(x) = \frac{\widetilde{\val}(x)}{r_x}, \quad
\widehat{1}(x) = \frac{\widetilde{1}(x)}{r_x}, \quad
\widehat{\veps}_x = \veps_{x}^{1/r_x},
\]
where $ r_x $ are half of the length of the minimum even period of the continued fraction expansion of $ x $.
Thus, $ \widehat{\veps}_x $ for a badly approximable number $ x $ is an analog of fundamental units of real quadratic orders.

Our purpose in this paper is to give uncountable infinitely many badly approximable numbers $ x $ such that the limits $ \val(x), \widehat{\val}(x), \widehat{1}(x), $ and $ \veps_x $ converge.
We also explicitly represent values of extended $ \val $ function at badly approximable numbers which have infinitely long cyclic parts in their continued fractions. 

To explain the main results, we introduce some notation for even words.
For an even number $ 2r \in 2\Z_{\ge 0} $ and positive integers $ k_1, \dots, k_{2r} $, we call a tuple $ W=(k_1, \dots, k_{2r}) $ an even word.
In the case when $ r = 0 $, we call it the empty word and denote it by $ \emptyset $.
For even words $ W_1 = (k_1^{(1)}, \dots, k_{2r_1}^{(1)}), \dots, W_n = (k_1^{(n)}, \dots, k_{2r_n}^{(n)}) $, define their product
\[
W_1 \cdots W_n :=(k_1^{(1)}, \dots, k_{2r_1}^{(1)}, \dots, k_1^{(n)}, \dots, k_{2r_n}^{(n)}).
\]
For an infinite sequence of even words $ W_1 = (k_1^{(1)}, \dots, k_{2r_1}^{(1)}), \dots, W_n = (k_1^{(n)}, \dots, k_{2r_n}^{(n)}), \dots $, define 
\[
[W_1 \cdots W_n \cdots] :=
[k_1^{(1)}, \dots, k_{2r_1}^{(1)}, \dots, k_1^{(n)}, \dots, k_{2r_n}^{(n)}, \dots].
\]
For a non-empty even word $ W $, define
\begin{align}
	N(W) &:= \max \{ n \in \Z_{>0} \mid \text{ there exists  an even word } W_1 \text{ such that } W = W_1^n \}.
\end{align}

Our first main result is as follows.

\begin{thm} \label{thm:main:realize_lim}
	Let $V_0, \dots, V_k $ be even words, $ W_1, \dots, W_k$ be non-empty even words, and $\{ a_{1, n} \}_{n=1}^{\infty}$, $ \dots $, $ \{ a_{k, n} \}_{n=1}^{\infty}$ be sequences in $ \Z_{\ge 0} $ such that $ a_{i, n} \to \infty $ and $ 2^{-n} a_{i, n} \to 0 $ when $ n $ goes to infinity.
	Let $ w_i := [\overline{W_i}] $,
	$ k' := \{ 0 \le i \le k \mid V_i \neq \emptyset \} $,
	\[
	A_n := k'n + \sum_{1 \le i \le k} \sum_{1 \le j \le n} a_{i, j}, \quad
	a_i := \lim_{n \to \infty} \frac{1}{A_n} \sum_{1 \le j \le n} a_{i, j},
	\]
	and
	\[
	U_n := V_1 W_1^{a_{1, n}} V_2 W_2^{a_{2, n}} \cdots V_k W_k^{a_{k, n}}, \quad
	x := [U_1 U_2 \cdots U_n \cdots ].
	\]
	Then the limits $ \val(x), \widehat{\val}(x), \widehat{1}(x), $ and $ \widehat{\veps}_x $  converge and it holds
	$ \widehat{1}(x) = 2 \log \widehat{\veps}_x > 0 $ and $ \val(x) = \widehat{\val}(x)/\widehat{1}(x) $.
	Moreover, it also holds
	\begin{equation} \label{eq:val_explicit}
		\val(x) =  
		\frac{a_1 N(W_1) \widehat{\val}(w_1) + \dots + a_k N(W_k) \widehat{\val}(w_k)}
		{a_1 N(W_1) \widehat{1}(w_1) + \dots + a_k N(W_k) \widehat{1}(w_k)}.
	\end{equation}
\end{thm}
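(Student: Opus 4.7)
The strategy is to realize each of the four defining limits as a Riemann-type sum over the ``blocks'' $W_i^{a_{i,j}}$ inside $U_j$, and to identify each block's contribution with a cycle integral at $w_i$. I would begin by choosing, in the definitions \eqref{eq:val_new2}--\eqref{eq:val_new4}, the natural decomposition $W_j := U_j$, and splitting the path $z_0\to\gamma_{U_1\cdots U_n} z_0$ as a concatenation of sub-paths corresponding to each $V_i$ and each $W_i^{a_{i,j}}$ for $1\le i\le k$, $1\le j\le n$. For a fixed $W_i^{a_{i,j}}$-block with cumulative prefix matrix $g_{i,j} = \bigl(\begin{smallmatrix}*&*\\c&d\end{smallmatrix}\bigr)\in\SL_2(\Z)$, the $\SL_2(\Z)$-invariance of $j$ together with the partial-fraction identity $\frac{d(gz)}{gz-x} = \bigl[\frac{1}{z-g^{-1}x}-\frac{c}{cz+d}\bigr]dz$ transforms the block integral into
\[
\int_{z_0}^{\gamma_{W_i}^{a_{i,j}} z_0} j(\tilde z)\left[\frac{-1}{\tilde z-g^{-1}x}+\frac{c}{c\tilde z+d}\right] d\tilde z.
\]

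The next step is to extract the leading term of each block. Since $W_i = W_{i,0}^{N(W_i)}$ for a primitive root $W_{i,0}$, one has $\gamma_{W_i}^{a_{i,j}} = \gamma_{w_i}^{a_{i,j}N(W_i)}$, so the cycle-integral identity $\int_{z_0}^{\gamma_{w_i}^m z_0} j(\tilde z)\bigl(\tfrac{1}{\tilde z-w_i'}-\tfrac{1}{\tilde z-w_i}\bigr)d\tilde z = m\widetilde{\val}(w_i)$ supplies the main contribution $a_{i,j}N(W_i)\widetilde{\val}(w_i)$, provided $\frac{-1}{\tilde z-g^{-1}x}$ is first replaced by $\frac{-1}{\tilde z-w_i}+\frac{1}{\tilde z-w_i'}$. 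Three types of error must then be controlled uniformly in $a_{i,j}$: (a) the replacement error from $g^{-1}x\to w_i$, which decays exponentially once all preceding blocks are sufficiently long (this is where $a_{i,n}\to\infty$ is used); (b) the auxiliary integral $\int j(\tilde z)/(\tilde z-w_i')d\tilde z$, which stays $O(1)$ because the orbit escapes geometrically from the repelling fixed point $w_i'$; and (c) the $\frac{c}{c\tilde z+d}$-term, handled by the analogous argument with $-d/c=g_{i,j}^{-1}\infty$ in place of $g^{-1}x$, after noting that this ``backward convergent'' stays bounded away from $w_i$ along the block. Summing over blocks yields $\sum_{i}N(W_i)\widetilde{\val}(w_i)\,S_i(n)$ to leading order, where $S_i(n):=\sum_{j\le n}a_{i,j}$, with $O(n)$ errors from the $V_i$ segments and the per-block remainders.

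Finally, dividing by $\sum_{j\le n}|U_j|$ and using $S_i(n)/A_n\to a_i$ together with $n/A_n\to 0$ (which follows from $a_{i,n}\to\infty$) produces the limit $\widehat{\val}(x)$; the same scheme with $j\equiv 1$ gives $\widehat{1}(x)$, and $\widehat{\veps}_x$ follows from the denominator asymptotic $\log c_n \sim \sum_{i,j}a_{i,j}N(W_i)\log\veps_{w_i}$, together with $Ln = \sum_{j\le n}|U_j|/n$. Combining these identifications, using $N(W_i)\widetilde{\val}(w_i) = |W_i|\widehat{\val}(w_i)$ and the corresponding relation for $\widehat{1}$, and collecting the $a_i$-weights, yields both $\widehat{1}(x)=2\log\widehat{\veps}_x$ and the explicit formula \eqref{eq:val_explicit}. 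The remaining equality $\val(x)=\widehat{\val}(x)/\widehat{1}(x)$ follows from comparing the geodesic-average definition \eqref{eq:val_new1} to the matrix-orbit integral \eqref{eq:val_new2} through a Cauchy-theorem path deformation, using that the endpoint $\gamma_{U_1\cdots U_n}z_0$ lies at bounded hyperbolic distance from the geodesic $S_{z_0,x}$. The chief obstacle is to obtain the three error bounds \emph{uniformly} in blocks whose sizes $a_{i,j}$ may vary widely, balancing the exponential decay of $|g^{-1}x-w_i|$ against the linear accumulation of errors within a long block; the hypothesis $2^{-n}a_{i,n}\to 0$ is invoked precisely to prevent any single block from dominating the $A_n$-normalization, ensuring the Cesàro-type averages $S_i(n)/A_n$ converge to the stated $a_i$.
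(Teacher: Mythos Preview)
Your block-decomposition strategy is essentially the paper's: split the path $z_0\to\gamma_{U_1\cdots U_n}z_0$ into $V_i$- and $W_i^{a_{i,j}}$-segments, pull each back by the accumulated prefix, show each $W_i$-block contributes $a_{i,j}N(W_i)\widetilde{\val}(w_i)+O(1)$, then normalize. The main packaging difference is that the paper works throughout with the two-ended form $\eta_{x',x}$ for a fixed real $x'<-1$, so that pullback by a prefix $g$ simply yields $\eta_{g^{-1}x',\,g^{-1}x}$ and no separate $c/(c\tilde z+d)$ correction appears; the comparison with $\int j\,\eta_{w_i',w_i}$ is then isolated as a single ``repetition frequency'' lemma (Lemma~\ref{lem:BI_eval_integral}), proved by showing the block integrals form a Cauchy sequence. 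Your three errors (a)--(c) unwind the same estimate, with $g^{-1}\infty=-d/c$ playing the role the paper assigns to $g^{-1}x'$.

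Two points in your write-up need adjusting. First, the decay in error~(a) is governed by the length $a_{i,j}$ of the \emph{current} block (since $g^{-1}x=[W_i^{a_{i,j}}V_{i+1}\cdots]$), not by the ``preceding blocks''. If you break the block into $a_{i,j}$ unit steps, the $l$-th step sees $\gamma_{W_i}^{-l}g^{-1}x=[W_i^{a_{i,j}-l}\cdots]$, so $|\gamma_{W_i}^{-l}g^{-1}x-w_i|$ decays geometrically in $a_{i,j}-l$ and the sum over $l$ is an $O(1)$ geometric series; there is no ``linear accumulation'' to balance. Second, you misplace the role of the hypothesis $2^{-n}a_{i,n}\to 0$: the limits $a_i=\lim S_i(n)/A_n$ are part of the \emph{data} in the statement, not something this hypothesis secures. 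In the paper the condition enters only through the Cauchy bound $|I_n-I_m|=O(a_m2^{-m})+O((a_n-a_m)2^{-a_m})$ in Lemma~\ref{lem:BI_eval_integral}; with your sharper per-block $O(1)$ bound for (a), that growth condition is in fact not needed for the block estimate at all, and only $a_{i,n}\to\infty$ (to make $n/A_n\to 0$) is used.
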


Here we remark that the author proved that the right-hand side in \cref{eq:val_explicit} equals to the limit of $ \val([\overline{U_n}]) $ in \cite[Theorem 1.1]{Mura}.

Next, we consider the value of $ \val $ function at the Thue--Morse word.
It is a typical word which is repetition-free.
We fix two even words $ V $ and $ W $.
Let $ \{ V, W \}^\omega $ be the set of even words generated by $ V, W $ which are of finite length.
Let $ \{ V, W \}^\omega $ be the set of even words generated by $ V, W $ which are of infinite length.
We define the monoid homomorphism $ h \colon \{ V, W \}^* \to \{ V, W \}^* $ by $ h(V) := VW, h(W) := WV $. 
In this setting, the word
\[
V_h := \lim_{n \to \infty} h^n(V) = VWWVWVVW \cdots \in \{ V, W \}^\omega
\]
is called the Thue--Morse word.
It is known that $ V_h $ is cubefree (for example, see~\cite[Section 3]{ChrisKarhu}), and thus it is not a word which satisfies the conditions in \cref{thm:main:realize_lim}.

The value of $ \val $ function at the Thue--Morse word is calculated as follows.

\begin{thm} \label{thm:Thue--Morse}
	Let $ x := [V_h] $ and $ w_n := [\overline{h^n(V)}] $.
	Then $ \widehat{\val}(x) $ converges if and only if $ \lim_{n \to \infty} \widehat{\val}(w_{2n})/2^{2n} $ converges.
	Moreover, we have
	\[
	\widehat{\val}(x) = \lim_{n \to \infty} \frac{1}{2^{2n}} \widehat{\val}(w_{2n}), \quad
	\widehat{1}(x) = \lim_{n \to \infty} \frac{1}{2^{2n}} \widehat{1}(w_{2n}), \quad
	\val(x) = \lim_{n \to \infty} \frac{1}{2^{2n}} \val(w_{2n}).
	\]
\end{thm}

The definitions of $ \widehat{\val}(x) $ and $ \widehat{1}(x) $ are like the Birkoff average in ergodic theory.
Khinchin-L\'{e}vy's theorem in ergodic theory asserts that for almost all $ x \in [0, 1] $ with respect to the measure
\[
d \mu := \frac{1}{\log 2} \frac{dx}{1+x}
\]
on $ [0, 1] $, the limit defining $ \widehat{\veps}_x $ converges and we have
\[
\log \widehat{\veps}_x
= -2 \int_{0}^{1} \log(x) d \mu
= \frac{\pi^2}{6 \log 2}.
\]

Although ergodic theory makes it possible to study values like the Birkoff average at almost all points, it does not tell us about a value at each point, for example, a badly approximable number.
We can state that this paper studies a range which ergodic theory does not deal with.

This paper will be organized as follows. 
In \cref{sec:real_quad}, we give renewal definitions of $ \val(w), \widehat{\val}(w) $, and $ \widehat{1}(w) $ for real quadratic numbers $ w $, which are suitable to generalize for badly approximable numbers.
In \cref{sec:fund}, we state some fundamental properties of the limits $ \val(x), \widehat{\val}(x), \widehat{1}(x), $ and $ \widehat{\veps}_x $ for each badly approximable number $ x $.
In \cref{sec:def_geodesic}, we describe $ \val(x) $ in terms of the limit value along a geodesic and study its properties.
In \cref{sec:def_word}, we also study basic properties of $ \widehat{\val}(x) $ and $ \widehat{1}(x) $ which are defined using the continued fraction expansion of $ x $.
In \cref{sec:elem_unit}, we study $ \veps_x $ and a relation between it and $ \widehat{1}(x) $.
Finally we prove \cref{thm:main:realize_lim} in \cref{sec:explicit_computation} and \cref{thm:Thue--Morse} in \cref{sec:Thue--Morse}.


\section*{Acknowledgement}


I would like to show my greatest appreciation to Professor Takuya Yamauchi for giving many pieces of advice. 
I am deeply grateful to Dr.~Toshiki Matsusaka for giving many comments. 
I would like to express my gratitude to Professor Shun'ichi Yokoyama and Dr.~Toshihiro Suzuki for giving me constructive comments on computing $ \val(x) $ numerically.
It is a pleasure to extend my thanks to Professor Tatsuya Tate for teaching me ergodic theory.
I also thank Dr.~Daisuke Kazukawa, Dr.~Hiroki Nakajima, and Dr.~Shin'ichiro Kobayashi for teaching me geodesics, hyperbolic geometry, and metric spaces.
I appreciate the technical assistance of Dr.~Naruaki Kato for introducing me to how to write works by using GitHub.



\section{A renewal definition of $ \val $ for real quadratic numbers} \label{sec:real_quad}


In this section, we give renewal definitions of $ \val $ for real quadratic numbers $ w $, which are suitable to generalize for badly approximable numbers.
To begin with, we list some basic notations and facts.


\subsection{Basic notation and properties for geodesics} \label{subsec:basic_geodesics}


Let $ \bbH := \{ z=x+y\iu \in \bbC \mid x, y \in \R, y>0 \} $ be the upper half plane and $ ds := y^{-1} \sqrt{dx^2 + dy^2} $ be its line element.
Here we remark that $ ds $ is invariant under $ \SL_2(\R) $.
We denote by $ \infty := \lim_{t \to + \infty} t \iu $ the point at infinity.
Let $ \bbP^1(\R) := \R \cup \{ \infty \} $.
For two points $ x', x \in \bbH \cup \bbP^1(\R) $, we denote by $ S_{x', x}  $ the geodesic in $ \bbH $ from $ x' $ to $ x $. 
For two points $ z, z' \in \bbH $, define hyperbolic distance between them by
\[
d_{\mathrm{hyp}}(z, z') := \int_{S_{z, z'}} ds.
\]
The following lemma is fundamental.

\begin{lem} \label{lem:distance}
	For two points $ z, z' \in \bbH $, the following statements hold.
	\begin{enumerate}
		\item \label{item:lem:distance1} 
		For any $ \sigma \in \SL_2(\R) $, it holds $ d_{\mathrm{hyp}}(\sigma(z), \sigma(z')) = d_{\mathrm{hyp}}(z, z') $.
		\item \label{item:lem:distance2}
		\textup{(\cite[Theorem 7.2.1 (ii)]{Bea})}
		\[
		\cosh d_{\mathrm{hyp}}(z, z') 
		= 1 + \frac{\abs{z - z'}^2}{2 \Im(z) \Im(z')}.
		\]
	\end{enumerate}
\end{lem}

For $ x', x \in \bbP^1(\R) $, define a $ 1 $-form
\[
\eta_{x', x} (z) := \bigg( \dfrac{1}{z - x'} - \dfrac{1}{z - x} \bigg) dz.
\]
Here, let $ 1/(z - \infty) := 0 $ if $ x = \infty $ or $ x' = \infty $.

By a direct calculation, we have the following lemma.

\begin{lem}[{\cite[Lemma 2.5]{Mura}}] \label{lem:eta}
	For any $ x, x' \in \bbP^1(\R) $ and $ \gamma \in \SL_2(\R) $, we have 
	\[
	\gamma^* \eta_{x', x} = \eta_{\gamma^{-1}x', \gamma^{-1}x}.
	\]
\end{lem}

We need the following lemma to express $ \val(w) $ as a quotient of two cycle integrals.

\begin{lem} \label{lem:eta_on_geod}
	For two points $ x, x' \in \bbP^1(\R) $ with $ x \neq x' $, we have $ ds = \eta_{x', x} $ on $ S_{x', x} $. 
\end{lem}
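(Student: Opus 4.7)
The plan is to reduce to the base case $(x', x) = (0, \infty)$ by exploiting two symmetries: first, the hyperbolic arc length $ds = y^{-1}\sqrt{dx^2+dy^2}$ is $\SL_2(\R)$-invariant; second, the excerpt already notes the transformation law $\gamma^* \eta_{x', x} = \eta_{\gamma^{-1} x', \gamma^{-1} x}$ for every $\gamma \in \SL_2(\R)$. Since the group $\SL_2(\R)$ acts transitively on ordered pairs of distinct points of $\bbP^1(\R)$, I can pick $\gamma \in \SL_2(\R)$ with $\gamma \cdot 0 = x'$ and $\gamma \cdot \infty = x$; this $\gamma$ sends the oriented geodesic $S_{0, \infty}$ to the oriented geodesic $S_{x', x}$. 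Pulling back, the claim $ds = \eta_{x', x}$ on $S_{x', x}$ becomes the claim $ds = \eta_{0, \infty}$ on $S_{0, \infty}$, so it is enough to check this one case.

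The base case is a direct computation. The geodesic $S_{0, \infty}$ is the positive imaginary axis, parametrized by $z = \iu y$ with $y > 0$ (oriented so $y$ increases from $0$ to $\infty$, matching the orientation from $x' = 0$ to $x = \infty$). On this contour $dz = \iu\, dy$, and by the convention $1/(z - \infty) := 0$ one has
\[
\eta_{0, \infty}(z) = \left( \frac{1}{z - 0} - \frac{1}{z - \infty} \right) dz = \frac{1}{\iu y} \cdot \iu\, dy = \frac{dy}{y}.
\]
Meanwhile the arc length on the same parametrization is $ds = y^{-1}\sqrt{dx^2 + dy^2} = dy/y$. The two agree, proving the base case, and by the reduction above the general statement follows.

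The only subtle point to be careful about is orientation: $\eta_{x', x}$ is an \emph{a priori} complex-valued $1$-form, so the equality with the real form $ds$ is to be understood as an equality of $1$-forms on the oriented geodesic $S_{x', x}$ traversed from $x'$ to $x$. The base-case computation above fixes the correct orientation (increasing $y$), and because one can choose $\gamma \in \SL_2(\R)$ respecting the ordering of the pair $(x', x)$, the orientation is preserved under the reduction. This is the only nontrivial bookkeeping; once orientation is pinned down, the proof is essentially a one-line calculation after a change of variables.
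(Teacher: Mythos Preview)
Your proof is correct and follows essentially the same approach as the paper: reduce via the $\SL_2(\R)$-action (using $\gamma^*\eta_{x',x}=\eta_{\gamma^{-1}x',\gamma^{-1}x}$ and $\SL_2(\R)$-invariance of $ds$) to the base case $(x',x)=(0,\infty)$, and then compute directly on the positive imaginary axis that $ds=dy/y=\eta_{0,\infty}$. The only cosmetic difference is that the paper writes down an explicit matrix $\sigma$ realizing the reduction, whereas you invoke transitivity on ordered pairs of boundary points.
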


\begin{proof}
	Let
	\[
	\sigma :=
	\begin{cases}
		\begin{pmatrix} x & x'/(x-x') \\ 1 & 1/(x-x') \end{pmatrix} \quad &\text{if } x \neq \infty,\\
		\begin{pmatrix} 1 & -x \\ 0 & 1 \end{pmatrix} \quad &\text{if } x = \infty
	\end{cases}
	\]
	be a matrix in $ \SL_2(\R) $.
	Since $ \sigma^* \eta_{x', x} = \eta_{0, \infty} $ and $ \sigma^* ds = ds $, it is enough to show when $ x=\infty $ and $ x'=0 $. 
	On the geodesic $ S_{0, \infty} = \{ t \iu \mid t \in \R_{>0} \} $, we have
	$ds = dt/t =  \eta_{0, \infty} $.
\end{proof}


\subsection{Basic notation and properties for real quadratic numbers} \label{subsec:basic_real_quad}


For a real quadratic number $ w $, let $ w' $ be the non-trivial Galois conjugate of $ w $,
$ \SL_2(\Z)_w $ be the stabilizer of $ w $ in $ \SL_2(\Z) $, and
$ \gamma_w = \begin{pmatrix} * & * \\ c & d \end{pmatrix} $ be the unique element of $ \SL_2(\Z)_w $ such that 
$ \SL_2(\Z)_w = \{ \pm \gamma_w^n \mid n \in \Z \} $ and $ \veps_w := cw + d > 1 $.
It holds $ \gamma_w w' = w' $.
Thus, the geodesic $ S_{w', w} $ is invariant under $ \gamma_w $.
It also holds $ \gamma_w^* \eta_{w', w} = \eta_{w', w} $ by \cref{lem:eta}.


\subsection{Basic notation and properties for continued fractions and words} \label{subsec:basic_words}


For the convenience of the reader, we recall notations for even words prepared in \cref{sec:intro}.

Each real number can be represented by the unique continued fraction
\[
[k_1, k_2, k_3, \dots] := k_1 + \cfrac{1}{k_2 + {\cfrac{1}{k_3 + \cfrac{1}{\ddots} } } }
\]
where $ k_1 $ is an integer and $ k_2, k_3, \dots $ are positive integers.
For positive integers $ k_1 ,\dots, k_s $, we set a periodic continued fraction
\[
[k_1, \dots, k_{r}, \overline{k_{r+1}, \dots, k_s}] := [k_1, \dots, k_{r}, k_{r+1}, \dots, k_s, k_{r+1}, \dots, k_s, k_{r+1}, \dots, k_s, \dots].
\]
For a real number, being quadratic (resp.~rational) is equivalent to having a periodic (resp.~finite) continued fraction expansion (\cite[Theorem 1.17]{Aig}).

For an even number $ 2r \in 2\Z_{\ge 0} $ and positive integers $ k_1, \dots, k_{2r} $, we call a tuple $ W=(k_1, \dots, k_{2r}) $ an even word.
In the case when $ r = 0 $, we call it the empty word and denote it by $ \emptyset $.
For even words $ W_1 = (k_1^{(1)}, \dots, k_{2r_1}^{(1)}), \dots, W_n = (k_1^{(n)}, \dots, k_{2r_n}^{(n)}) $, define their product
\[
W_1 \cdots W_n :=(k_1^{(1)}, \dots, k_{2r_1}^{(1)}, \dots, k_1^{(n)}, \dots, k_{2r_n}^{(n)}).
\]
For an infinite sequence of even words $ W_1 = (k_1^{(1)}, \dots, k_{2r_1}^{(1)}), \dots, W_n = (k_1^{(n)}, \dots, k_{2r_n}^{(n)}), \dots $, define 
\[
[W_1 \cdots W_n \cdots] :=
[k_1^{(1)}, \dots, k_{2r_1}^{(1)}, \dots, k_1^{(n)}, \dots, k_{2r_n}^{(n)}, \dots].
\]
For the empty word $ \emptyset $ and a non-empty even word $ W=(k_1, \dots, k_{2r}) $, define
\begin{align}
	\gamma_\emptyset^{} &:= \begin{pmatrix} 1 & 0 \\ 0 & 1 \end{pmatrix} \in \SL_2(\Z), \\
	\gamma_W^{} &:= \begin{pmatrix} k_1 & 1 \\ 1 & 0 \end{pmatrix} 
	\begin{pmatrix} k_2 & 1 \\ 1 & 0 \end{pmatrix} \dots 
	\begin{pmatrix} k_{2r} & 1 \\ 1 & 0 \end{pmatrix}
	\in \SL_2(\Z), \\
	\abs{W} &:= r, \\
	N(W) &:= \max \{ n \in \Z_{>0} \mid \text{ there exists  an even word } W_1 \text{ such that } W = W_1^n \}.
\end{align}

Moreover, for an even word $ V=(l_1, \dots, l_{2s}) $ and a non-empty even word $ W=(k_1, \dots, k_{2r}) $, define
\begin{align}
	[V \overline{W}] &:= [l_1, \dots, l_{2s}, k_1, \dots, k_{2r}, k_1, \dots, k_{2r}, k_1, \dots, k_{2r}, \dots].
\end{align}

The matrix $ \gamma_{W}^{} $ has the following properties.

\begin{lem}[{\cite[Lemma 2.3]{Mura}}] \label{lem:gamma_W}
	The following properties hold.
	\begin{enumerate}
		\item \label{item:lem:gamma_W1} For even words $ W_1, \dots, W_n $, we have $ \gamma_{W_1 \cdots W_n }^{} = \gamma_{W_1}^{} \dots \gamma_{W_n}^{} $.
		\item \label{item:lem:gamma_W2} Let $ V $, be an even word and $ W \in \Z_{>0}^{\N} $  be an infinite word.
		Then, for an irrational number $ x := [W] $, we have $ \gamma_V^{} x = [V W] $.
		In particular, for a real quadratic number $ v := [\overline{V}] $, we have $ \gamma_V^{} v = v $.
		\item \label{item:lem:gamma_W3} For a reduced real quadratic number $w$, there exists the minimal even word $ W $ such that $ w = [\overline{W}] $.
		Then we have $ N(W)=1 $ and $ \gamma_W^{} = \gamma_{w} $.
		Generally, for a non-empty even word $ W $ and a real quadratic number $ w = [\overline{W}] $,
		we have $ \gamma_W^{} = \gamma_{w}^{N(W)} $.
		\item \label{item:lem:gamma_W4} For a real quadratic number $w$, there exist even words $ V $ and $ W $ such that $ w = [V \overline{W}] $.
		Then $ \gamma_V^{} \gamma_W^{} \gamma_V^{-1} = \gamma_w^{N(W)} $.
	\end{enumerate}
\end{lem}


\subsection{Kaneko's val} \label{subsec:val}


Let $ j \colon \bbH \to \bbC $ be the elliptic modular $ j $-function.
For a real quadratic number $ w $, define
\[
\widetilde{\val}(w) := 
\int_{z_0}^{\gamma_w z_0} j \eta_{w', w}, \quad
\widetilde{1}(w) := 
\int_{z_0}^{\gamma_w z_0} \eta_{w', w}, \quad
\val(w) := \frac{\widetilde{\val}(w)}{\widetilde{1}(w)},
\]
where $ z_0 \in \bbH $ is any point.

The following lemma follows from \cref{lem:eta,lem:eta_on_geod}.

\begin{lem} \label{lem:val_cycle_int}
	For a real quadratic number $ w $, any point $ z_0 \in S_{w', w} $ and any positive integer $ n $, it holds
	\begin{align}
		n \widetilde{\val}(w) 
		&= \int_{z_0}^{\gamma_w^n z_0} j \eta_{w', w}
		= \int_{z_0}^{\gamma_w^n z_0} j ds, \quad
		\\
		n \widetilde{1}(w) 
		&= \int_{z_0}^{\gamma_w^n z_0} \eta_{w', w}
		= \int_{z_0}^{\gamma_w^n z_0} ds
		= d_{\mathrm{hyp}}(z_0, \gamma_w^n z_0).
	\end{align}
\end{lem}

The following lemma is fundamental.

\begin{lem}[{\cite[Propositon 2.7]{Mura}}] \label{lem:tilde_1}
	For a real quadratic number $ w $, it holds
	\[
	\widetilde{1}(w) = 2 \log \veps_w.
	\]
\end{lem}

By the above lemmas, we obtain the following expression of $ \val(w) $, which are suitable to generalize for badly approximable numbers.

\begin{prop} \label{prop:int_rep}
	For a real quadratic number $ w $ and any point $ z_0 \in S_{w', w} $, it holds
	\begin{align}
		\val(w)
		&= \lim_{\substack{ z \in S_{z_0, x} \\ z \to w}}
		\frac{1}{d_{\mathrm{hyp}}(z_0, z)}
		\int_{S_{z_0, z}} j ds.
	\end{align}
\end{prop}

\begin{proof}
	For any positive number $ d \in \R_{>0} $, there exists the unique point $ z_d \in S_{z_0, w} $ such that $ d_{\mathrm{hyp}}(z_0, z_d) = d $.
	Let $ l := d_{\mathrm{hyp}}(z_0, \gamma_w z_0) = \widetilde{1}(w) $.
	It suffices to show that
	\[
	\lim_{n \to \infty} \frac{1}{nl + d} \int_{S_{z_0, z_{nl + d}}} j ds
	= \val(w)
	\]
	for any number $ 0 \le d < l $.
	Since $ \gamma_w^n z_d \in S_{z_0, w} $ and $ d_{\mathrm{hyp}}(z_d, \gamma_w^n z_d) = nl $ by \cref{lem:val_cycle_int}, we have $ z_{nl + d} = \gamma_w^n z_d $.
	Thus, we obtain
	\begin{align}
		\lim_{n \to \infty} \frac{1}{nl + d} \int_{S_{z_0, z_{nl + d}}} j ds
		&=
		\lim_{n \to \infty} \frac{1}{nl + d} \left( \int_{S_{z_0, z_d}} + \int_{S_{z_d, \gamma_w^n z_d}} \right) j ds
		\\
		&=
		\lim_{n \to \infty} \frac{1}{nl + d} \left( \int_{S_{z_0, z_d}} j ds + n \widetilde{\val}(w) \right)
		\\
		&=
		\val(w)
	\end{align}
	by \cref{lem:val_cycle_int}.
\end{proof}

To generalize $ \widetilde{\val}(w) $, $ \widetilde{1}(w) $, and $ \veps_{w} $ for badly approximable numbers, we need some notation.
For a real quadratic number $ w = [V \overline{W}] $, define
\begin{equation} \label{eq:val_hat} 
	\widehat{\val}(w) = \frac{N(W)}{\abs{W}} \widetilde{\val}(w), \quad
	\widehat{1}(w) = \frac{N(W)}{\abs{W}} \widetilde{1}(w), \quad
	\widehat{\veps}_w = \veps_{w}^{N(W)/\abs{W}}.
\end{equation}
These values are independent of a choice of an even word $ W $ and have the following expression.

\begin{prop} \label{prop:word_rep}
	Let $ w $ be a real quadratic number.
	Let $ r \ge 0 $ and $ s \ge 1 $ be integers and $ W_1, \dots, W_{r+s} $ be non-empty even words such that 
	$ w = [W_1 \cdots W_r \overline{W_{r+1} \cdots W_{r+s}}] $. 
	For an integer $ n > r+s $, let $ W_{n} := W_{r+l} $ where $ 0 \le l < s $ is an integer such that $ n \equiv r+l \bmod s $.
	For a positive integer $ i $, let $ \gamma_i := \gamma_{W_i}^{} $. 
	Let $ z_0 \in \bbH $ be a point.
	Then, we have
	\begin{align}
		\widehat{\val}(w)
		&= \lim_{n \to \infty} \frac{1}{n} \int_{z_0}^{\gamma_1 \cdots \gamma_{n} z_0} j \eta_{w', w},
		\\
		\widehat{1}(w)
		&= \lim_{n \to \infty} \frac{1}{n} \int_{z_0}^{\gamma_1 \cdots \gamma_{n} z_0} \eta_{w', w}.
	\end{align}
\end{prop}

\begin{proof}
	Let $ N := N(W_{r+1} \cdots W_{r+s}) $. 
	For a positive integer $ n $, let
	\[
	a_{n} 
	:= \int_{\gamma_1 \cdots \gamma_{n-1} z_0}^{\gamma_1 \cdots \gamma_{n} z_0} j \eta_{w', w}.
	\]
	Since
	\[
	\frac{a_1 + \dots + a_n}{n}
	=
	\frac{1}{n} \int_{z_0}^{\gamma_1 \cdots \gamma_{n} z_0} j \eta_{w', w}, \quad
	\widehat{\val}(w) = \frac{N}{s} \widetilde{\val}(w),
	\]
	it suffices to show that
	\begin{equation} \label{eq:word_rep}
		\lim_{n \to \infty} \frac{a_1 + \dots + a_{r+l+ns}}{r+l+ns}
		= \frac{N}{s} \widetilde{\val}(w)
	\end{equation}
	for any integer $ 0 \le l < s $.
	
	Since $ \gamma_{m+s} = \gamma_m $ for any integer $ m > r+s $, it holds
	\begin{align}
		\gamma_1 \cdots \gamma_{r+l+ns}
		&=
		\left( \gamma_1 \cdots \gamma_{r+l} \gamma_{r+l+1} \cdots \gamma_{r+l+s} (\gamma_1 \cdots \gamma_{r+l})^{-1} \right)^n
		\gamma_1 \cdots \gamma_{r+l}
		=
		\gamma_w^{Nn} \gamma_1 \cdots \gamma_{r+l}
	\end{align}
	by \cref{lem:gamma_W} \cref{item:lem:gamma_W4}.
	Let $ z'_0 := \gamma_1 \cdots \gamma_{r+l} z_0 $.
	Then, we have
	\begin{align}
		a_{r+l} + \dots + a_{r+l+ns}
		&=
		\int_{\gamma_1 \cdots \gamma_{r+l} z_0}^{\gamma_1 \cdots \gamma_{r+l+ns} z_0}
		j \eta_{w', w}
		\\
		&=
		\int_{z'_0}^{\gamma_w^{Nn} z'_0}
		j \eta_{w', w}
		\\
		&=
		Nn \widetilde{\val}(w)
	\end{align}
	by \cref{lem:val_cycle_int}.
	Thus, we obtain \cref{eq:word_rep}.
	The second equality is similarly proved.
\end{proof}

\begin{rem}
	In \cref{prop:word_rep}, the first finite term of a continued fraction does not contribute to the left-hand side.
	Thus, the right-hand side in \cref{prop:word_rep} depends only on the period in the continued fraction.
\end{rem}


\section{Fundamental properties of the extended $ \val $ function} \label{sec:fund}



With reference to \cref{prop:int_rep,prop:word_rep}, we now define $ \val(x) $ for a badly approximable number $ x $ as follows.
Let $ W_1, W_2, \dots $ be an infinite sequence of even words such that $ x= [W_1 W_2 \cdots] $ and the set $ \{ W_1, W_2, \dots  \} $ is finite.
We remark that if $ x < 1 $ then the first entry of $ W_1 $ is not positive.
Let $ c_n \in \Z_{>0} $ be the denominator of the rational number $ [W_1 \cdots W_n] $ and put
\[
L := \lim_{n \to \infty} \frac{\abs{W_1} + \dots + \abs{W_n}}{n}.
\]
Take $ x' \in \bbP^1(\R) \setminus \{ x \} $ and $ z_0 \in \bbH $.
We define the limits $ \val(x), \widehat{\val}(x), \widehat{1}(x), $ and $ \widehat{\veps}_x $ by
\begin{align}
	\val(x) &:= \lim_{
		\substack{ z \in S_{z_0, x} \\
			z \to x}}
	\frac{1}{d_{\mathrm{hyp}}(z_0, z)}
	\int_{S_{z_0, z}} j ds, 
	\label{eq:val_new1} \\
	\widehat{\val}(x) &:=
	\lim_{n \to \infty} \frac{1}{\abs{W_1} + \dots + \abs{W_n}}
	\int_{z_0}^{\gamma_{W_1 \cdots W_n}^{} z_0} j \eta_{x', x},
	\label{eq:val_new2}\\
	\widehat{1}(x) &:=
	\lim_{n \to \infty} \frac{1}{\abs{W_1} + \dots + \abs{W_n}}
	\int_{z_0}^{\gamma_{W_1 \cdots W_n}^{} z_0} \eta_{x', x},
	\label{eq:val_new3} \\
	\widehat{\veps}_x &:= \lim_{n \to \infty} c_n^{1/Ln}.
	\label{eq:val_new4}
\end{align}

\begin{figure}[hbtp]
	\centering
	\includegraphics[clip,width = 15.0cm]{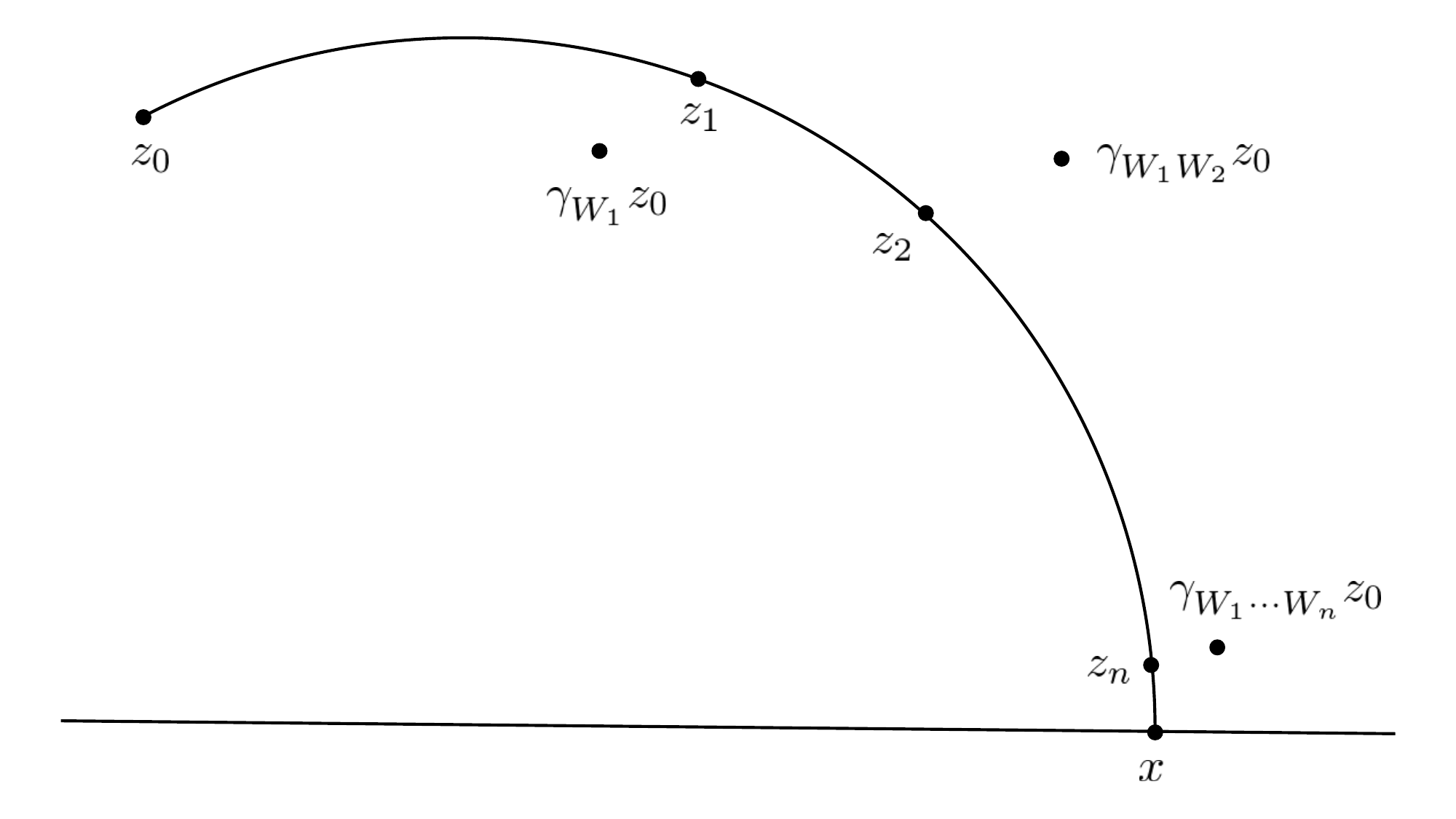}
	\caption{The paths for the definitions of $ \val(x), \widehat{\val}(x) $, and $ \widehat{1}(x) $ in \cref{eq:val_new1}, \cref{eq:val_new2}, and \cref{eq:val_new3}}
	\label{fig:main}
\end{figure}

In the later sections, we will prove the following.

\begin{thm} \label{thm:main:extension}
	Let $ x $ be a badly approximable number and $ W_1, W_2, \dots $ be an infinite sequence of even words such that $ x= [W_1 W_2 \cdots] $ and the set $ \{ W_1, W_2, \dots  \} $ is finite.
	Take any points $ x' \in \bbP^1(\R) \setminus \{ x \} $ and $ z_0 \in \bbH $.
	Then the following statements hold.
	\begin{enumerate}
		\item \label{item:thm:main1}
		The limit values $ \val(x), \widehat{\val}(x), \widehat{1}(x), $ and $ \widehat{\veps}_x $ defined in each of \cref{eq:val_new1}, \cref{eq:val_new2}, \cref{eq:val_new3}, and \cref{eq:val_new4} are bounded.
		Moreover, if they converge then they are independent of $ x', z_0 $ and $ W_1, W_2, \dots $.
		Further, they are $ \SL_2(\Z) $-invariant.
		\item \label{item:thm:main2}
		If $ x $ is a real quadratic number, then $ \val(x), \widehat{\val}(x), \widehat{1}(x), $ and $ \widehat{\veps}_x $ converge and coincide with the values defined in \cref{eq:def_val} and \cref{eq:val_hat}.
		\item \label{item:thm:main3}
		If $ \widehat{\val}(x) $ and $ \widehat{1}(x) $ converge, then
		for any point $ x' \in \bbP^1(\R) \setminus \{ x \} $ and a sequence $ \{ x_n \}_{n=1}^\infty $ of real numbers with
		$ z_n := \gamma_{W_1 \cdots W_n}^{} (x_n + \iu) \in S_{\infty, x}, z_n \to x $, we have
		\begin{align}
			\widehat{\val}(x) &= \lim_{n \to \infty} \frac{1}{\abs{W_1} + \dots + \abs{W_n}} \int_{z_0}^{z_n} j \eta_{x', x}, \\
			\widehat{1}(x) &= \lim_{n \to \infty} \frac{1}{\abs{W_1} + \dots + \abs{W_n}} \int_{z_0}^{z_n} \eta_{x', x}.
		\end{align}
		Here we define $ 1/(z - \infty) := 0 $ if $ x' = \infty $.
		\item \label{item:thm:main4}
		The limit $ \widehat{\veps}_x $ converges if and only if $ \widehat{1}(x) $ converges.
		Moreover, we have $ \widehat{1}(x) = 2 \log \widehat{\veps}_x $.
		\item \label{item:thm:main5}	
		It holds $ \widehat{\veps}_x \ge (3 + \sqrt{5})/2 $.
		In particular, $ \widehat{1}(x) > 0 $.
		\item \label{item:thm:main6}
		If $ \widehat{\val}(x) $ and $ \widehat{1}(x) $ converge, then $ \val(x) $ converges and
		we have $ \val(x) = \widehat{\val}(x)/\widehat{1}(x) $.
	\end{enumerate}
\end{thm}

Here we remark that \cref{thm:main:extension} \ref{item:thm:main6} follows from \cref{thm:main:extension} \cref{item:thm:main4} and \cref{item:thm:main5}.

\begin{figure}[bthp]
	\centering
	\includegraphics[clip,width = 15.0cm]{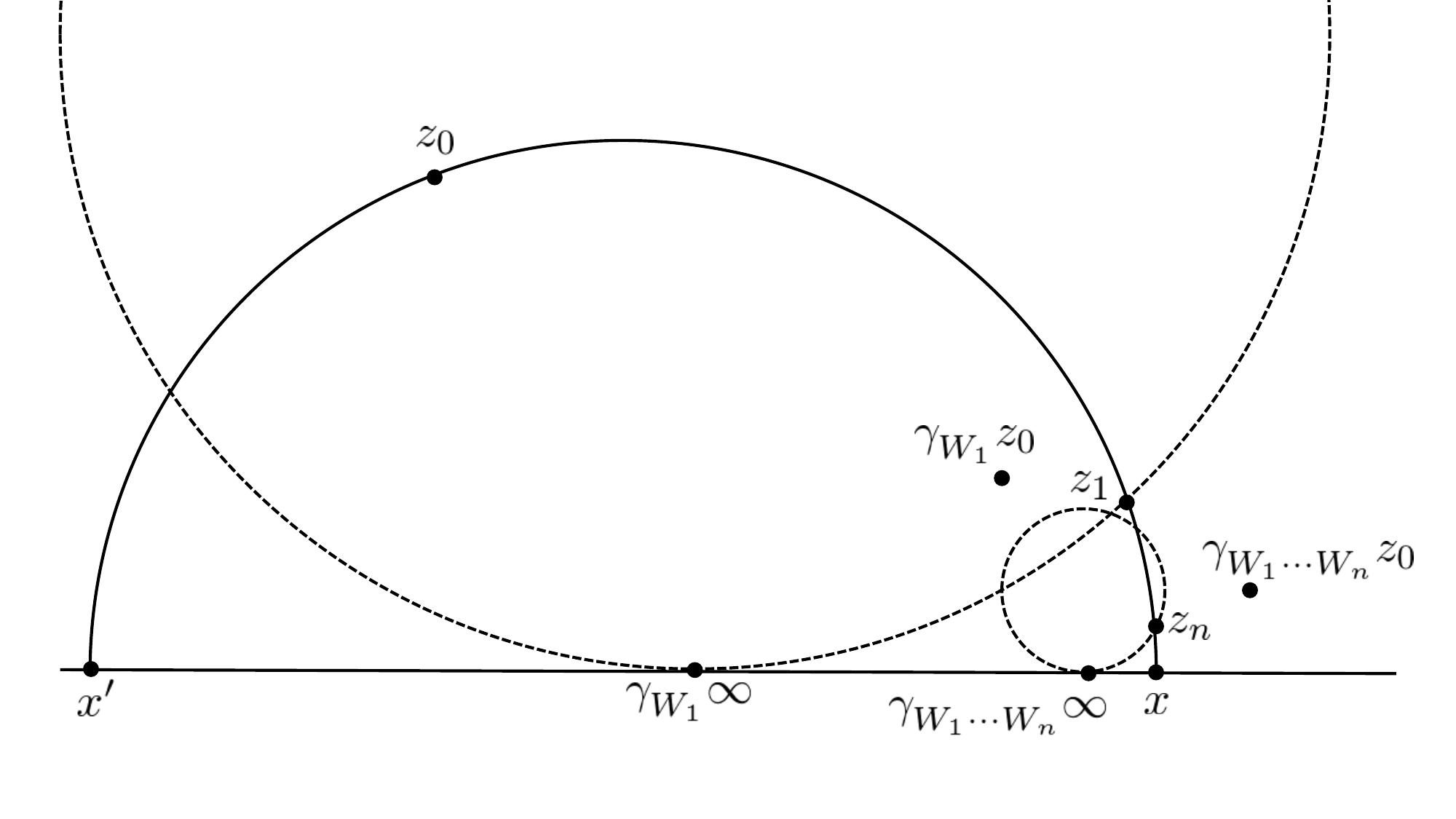}
	\caption{The path of integrals for the limit values in \cref{thm:main:extension} \cref{item:thm:main3}}
	\label{fig:coincidence}
\end{figure}


\section{The extended $ \val $ function as the limit value along a geodesic} \label{sec:def_geodesic}


In this section, we prove \cref{thm:main:extension} \ref{item:thm:main1} and \ref{item:thm:main2} for $ \val(x) $.
We start with the following properties of badly approximable numbers.
Let $ \pi \colon \bbH \to \SL_2(\Z) \backslash \bbH $ be the natural projection.

\begin{prop}[{\cite[Chapter VII, Theorem 3.4]{Dal}}, {\cite[Chapter VII, Lemma 3.5]{Dal}}] \label{prop:bad_appr}
	For an irrational number $ x \in \R \setminus \Q $, the following statements are equivalent. 
	\begin{enumerate}
		\item $ x $ is badly approximable;
		\item $ L(x)  := \sup \left\{ L > 0 \relmiddle| \exists p_n, q_n \in \Z \text{ s.t. } (p_n, q_n) = 1, q_n \to \infty, 
		\abs{ x - p_n/q_n } \le 1/L q_n^2 \right\}
		< \infty $;
		\item $ h(x) := \sup \left\{ t > 0 \mid \exists z_n \in S_{z_0, x} \text{ s.t. } z_n \to x, 
		\pi(z_n) \in \pi(\{ z \in \bbH \mid \ImNew(z) = t \}) \right\}
		< \infty $;
		\item For any point $ z_0 \in \bbH $, the closure $ \overline{\pi(S_{z_0, x})} \subset \SL_2(\Z) \backslash \bbH $ is compact. 
	\end{enumerate}
\end{prop}

The constant $ L(x) $ is called the Lagrange number of $ x $. 
Usually, badly approximable numbers are defined as irrational numbers with finite Lagrange numbers.

The following is a key fact in this section.

\begin{thm}[{\cref{thm:main:extension} \cref{item:thm:main1} and \cref{item:thm:main2} for $ \val(x) $}] \label{thm:new_path}
	Let $ x $ be a badly approximable number.
	Take any point $ z_0 \in \bbH $.
	Then the following statements hold.
	\begin{enumerate}
		\item \label{item:thm:new_path1} 
		The limit value
		\[
		\val(x) := \lim_{
			\substack{ z \in S_{z_0, x} \\
				z \to x}}
		\frac{1}{d_{\mathrm{hyp}}(z_0, z)}
		\int_{S_{z_0, z}} j ds
		\]	
		is bounded.
		\item \label{item:thm:new_path2} 
		For any matrix $ \gamma \in \SL_2(\Z) $, we have $ \val(\gamma x) = \val(x) $.		
		\item \label{item:thm:new_path3} 
		If the limit $ \val(x) $ converges, then it is independent of $ z_0 $.
		\item \label{item:thm:new_path4} 
		For a real quadratic number $ x $, $ \val(x) $ converges and coincides with the value defined in \cref{eq:def_val}.
	\end{enumerate}
\end{thm}

\begin{proof}
	To begin with, we remark that \cref{item:thm:new_path4} follows from \cref{prop:int_rep,item:thm:new_path3}.
	
	We will prove \ref{item:thm:new_path1}. 	
	Since $ x $ is badly approximable, the closure $ \overline{\pi(S_{x, z_0})} \subset \SL_2(\Z) \backslash \bbH $ is compact by \cref{prop:bad_appr}.
	Thus,  we can choose $ K>0 $ such that $ \abs{j(z)} \le K $ on $ S_{z_0, x} $.
	Since
	\begin{align}
		\abs{\frac{1}{d_{\mathrm{hyp}}(z_0, z)} 	\int_{S_{z_0, z}} j ds} 
		= \frac{1}{d_{\mathrm{hyp}}(z_0, z)} 	\int_{S_{z_0, z}} K ds
		= K,
	\end{align}
	the limit is bounded.
	
	The second claim \ref{item:thm:new_path2} follows from the fact that $ j(z) $ is $ \SL_2(\Z) $-invariant.
	
	As for the third claim \ref{item:thm:new_path3}, for each point $ z_0 \in \bbH $, we consider
	\[
	\val(x, z_0) := \lim_{
		\substack{ z \in S_{z_0, x} \\
			z \to x}}
	\frac{1}{d_{\mathrm{hyp}}(z_0, z)}
	\int_{S_{z_0, z}} j ds.
	\]
	We will show that this value is independent of $ z_0 $ in two steps.
	
	\textbf{Step 1}.
	We will show that $ \val(x, z_0) = \val(x, z_1) $ for any point $ z_1 \in S_{z_0, x} $.
	For $ i \in \{0, 1\} $, let
	\[
	a_i(z) := \int_{S_{z_i, z}} j ds, \quad b_i(z) := \int_{S_{z_i, z}} ds.
	\]
	Clearly,
	\[	
	\lim_{
		\substack{ z \in S_{z_0, x} \\
			z \to x}}
	b_i(z)
	= \infty.
	\]
	Since
	\[
	a_0(z) - a_1(z) = \int_{S_{z_0, z_1}} j ds, \quad b_0(z) - b_1(z) = \int_{S_{z_0, z_1}} ds
	\]
	is bounded, we have
	\[
	\val(x, z_0)
	= \lim_{
		\substack{ z \in S_{z_0, x} \\
			z \to x}}
	\frac{a_0(z)}{b_0(z)}
	= \lim_{
		\substack{ z \in S_{z_0, x} \\
			z \to x}}
	\frac{a_1(z) + O(1)}{b_1(z) + O(1)}
	= \lim_{
		\substack{ z \in S_{z_0, x} \\
			z \to x}}
	\frac{a_1(z)}{b_1(z)}
	= \val(x, z_1).
	\]
	
	\begin{figure}[hbtp]
		\centering
		\includegraphics[clip,width = 15.0cm]{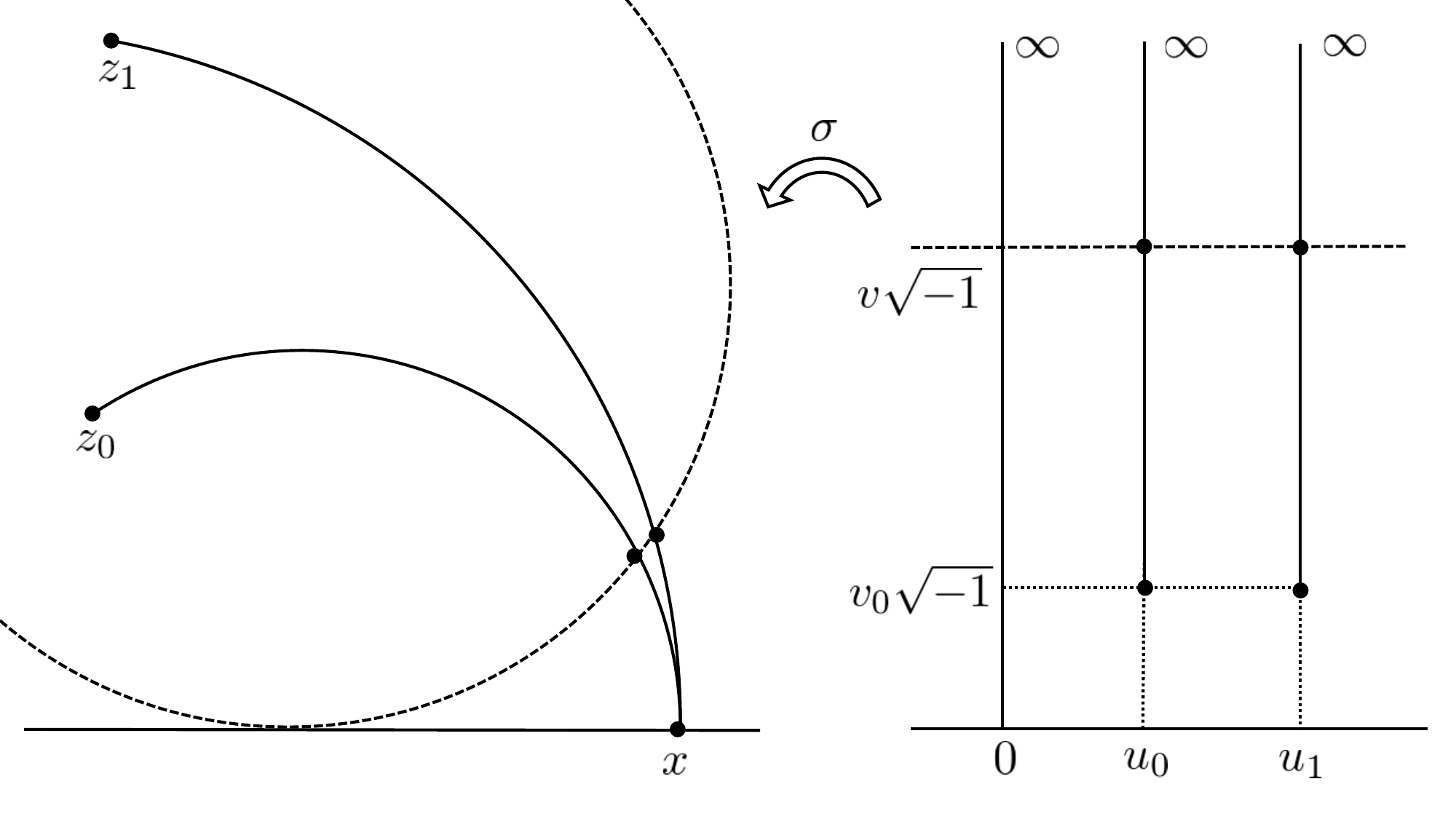}
		\caption{The paths of integrals appearing in Step 2 in the proof of \cref{thm:new_path} \ref{item:thm:new_path3}}
		\label{fig:new_path}
	\end{figure}
	
	\textbf{Step 2}.
	We will show that $ \val(x, z_0) = \val(x, z_1) $ for any points $ z_0, z_1 \in \bbH $.
	This part is due to Matsusaka.
	Take a matrix $ \sigma \in \SL_2(\R) $ such that $ \sigma^{-1} x = \infty $.
	Let $ v_0 $ be any positive number.
	By Step 1, we can replace $ z_i $ by $ \Re(z_i) + v_0 \iu $.
	Thus,  we may assume $ v_0 = \Im(\sigma^{-1} z_0) = \Im(\sigma^{-1} z_1) $. 	
	Let $ u_i := \ReNew(\sigma^{-1} z_i) $ for $ i \in  \{ 0, 1 \} $.
	Then $ \sigma^{-1} S_{z_i, x} $ is a subgeodesic of $ S_{u_i , \infty} $.
	Since $ ds = y^{-1} \sqrt{dx^2 + dy^2} = y^{-1}dy $ on $ S_{u_i, \infty} $, we have
	\begin{align}
		\val(x, z_i)
		&= \lim_{
			\substack{ z \in S_{z_i, x} \\
				z \to x}}
		\frac{1}{d_{\mathrm{hyp}}(z_i, z)}
		\int_{S_{z_i, z}} j ds\\
		&= \lim_{v \to \infty}
		\frac{1}{d_{\mathrm{hyp}}(\sigma(u_i + v\iu), \sigma(u_i + v_0 \iu))}
		\int_{v_0}^{v} j(\sigma(u_i + y\iu)) \frac{dy}{y}.
	\end{align}
	Since
	\begin{align}
		d_{\mathrm{hyp}}(\sigma(u_i + v\iu), \sigma(u_i + v_0 \iu))
		= d_{\mathrm{hyp}}(v\iu, v_0 \iu) 
		= \int_{v_0}^{v} \frac{dy}{y} 
		= \log \frac{v}{v_0},
	\end{align}
	we obtain	
	\[
	\val(x, z_0) - \val(x, z_1)
	= \lim_{v \to \infty}
	\frac{1}{\log v/v_0}
	\int_{v_0}^{v} \left( j(\sigma(u_0 + y\iu)) - j(\sigma(u_1 + y\iu)) \right) \frac{dy}{y}.
	\]
	For two points $ z, z' \in \bbH $, the hyperbolic distances on $ \bbH $ and $ \SL_2(\Z) \backslash \bbH $ are respectively defined by
	\[
	d_{\mathrm{hyp}}(z, z') := \length(S_{z, z'}), \quad
	d_{\SL_2(\Z) \backslash \bbH}(\pi(z), \pi(z')) 
	:= \min \left\{ d_{\mathrm{hyp}}(\gamma z, z') \mid \gamma \in \SL_2(\Z) \right\}.
	\]
	They define the hyperbolic distances on $ \bbH $ and $ \SL_2(\Z)\backslash\bbH $ respectively.
	By \cref{lem:distance}, we have
	\[
	\cosh d_{\mathrm{hyp}}(\sigma(u_0 + y\iu), \sigma(u_1 + y\iu))
	= \cosh d_{\mathrm{hyp}}(u_0 + y\iu, u_1 + y\iu) 
	= 1 + \frac{\abs{u_0 - u_1}^2}{2y^2}.
	\]
	Thus, we have
	\[
	\lim_{y \to \infty} d_{\mathrm{hyp}}(\sigma(u_0 + y\iu), \sigma(u_1 + y\iu)) = 0.
	\]
	Since $ x $ is badly approximable,
	$ \overline{\pi(S_{x, z_0})} \cup \overline{\pi(S_{x, z_1})} 
	\subset \SL_2(\Z) \backslash \bbH $
	is compact by \cref{prop:bad_appr}.
	Thus, $ j(z) $ is continuous on $ \overline{\pi(S_{x, z_0})} \cup \overline{\pi(S_{x, z_1})} $ with respect to the metric $ d_{\SL_2(\Z) \backslash \bbH} $.
	Hence for any $ \veps > 0 $, there exists $ v_0 > 0 $ such that
	\[
	\abs{ j(\sigma(u_0 + y\iu)) - j(\sigma(u_1 + y\iu)) } < \veps
	\]
	for any $ y > v_0 $.
	Then we have
	\[
	\abs{\val(x, z_0) - \val(x, z_1)}
	\le \lim_{v \to \infty}
	\frac{1}{\log v/v_0}
	\int_{v_0}^{v} \veps  \frac{dy}{y}
	= \veps,
	\]
	that is, $ \val(x, z_0) = \val(x, z_1) $. 
\end{proof}


\section{The extended $ \val $ function as the limit value along a continued fraction} \label{sec:def_word}


In this section, we prove \cref{thm:main:extension} \ref{item:thm:main1}, \ref{item:thm:main2}, and \ref{item:thm:main3} for $ \widehat{\val}(x) $ and $ \widehat{1}(x) $.

To prove \cref{thm:main:extension} \ref{item:thm:main2}, we prepare several lemmas.

\begin{lem} \label{lem:C_w}
	For a badly approximable number $ x = [k_1, k_2, \dots] $, let 
	$ \gamma_n := \gamma_{(k_1, \dots, k_{2n})} $.
	Then for any point $ x' \in \bbP^1(\R) $, the set 
	$ \{ \gamma_n^{-1} x' \mid n \in \Z_{>0} \}  $ is bounded. 
\end{lem}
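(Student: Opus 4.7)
The plan is to identify $\gamma_n$ with the matrix of continued-fraction convergents of $x$ and then to compute $\gamma_n^{-1} x'$ directly, splitting into three cases for the value of $x'$.

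\emph{Step 1 (Matrix form).} First I would establish the standard identity
\[
\gamma_n = \begin{pmatrix} p_{2n} & p_{2n-1} \\ q_{2n} & q_{2n-1} \end{pmatrix}, \quad
\gamma_n^{-1} = \begin{pmatrix} q_{2n-1} & -p_{2n-1} \\ -q_{2n} & p_{2n} \end{pmatrix},
\]
where $p_m/q_m := [k_1, \ldots, k_m]$ is the $m$-th convergent of $x$; this follows inductively from the product formula defining $\gamma_W$. Since $x$ is badly approximable, there is $M$ with $k_i \le M$ for all $i \ge 2$, from which one obtains the uniform bounds $q_{m+1} \le (M+1) q_m$ and the classical approximation estimate $|q_m x - p_m| < 1/q_{m+1}$.

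\emph{Step 2 (Three cases for $x' \in \bbP^1(\R)$).} If $x' = \infty$, then $\gamma_n^{-1} \infty = -q_{2n-1}/q_{2n} \in (-1, 0)$, which is clearly bounded. If $x' = x$, iterating the identity $\begin{pmatrix} k_1 & 1 \\ 1 & 0 \end{pmatrix}^{-1} \cdot x = [k_2, k_3, \ldots]$ shows that $\gamma_n^{-1} x = [k_{2n+1}, k_{2n+2}, \ldots]$, which lies in $(1, M+1)$ for $n \ge 1$. Finally, for $x' \in \R \setminus \{x\}$, I would write
\[
\gamma_n^{-1} x' = \frac{q_{2n-1}(x'-x) + (q_{2n-1} x - p_{2n-1})}{-q_{2n}(x'-x) - (q_{2n} x - p_{2n})}
\]
and divide both numerator and denominator by $q_{2n}(x'-x)$; the error terms $(q_m x - p_m)/(q_{2n}(x'-x))$ have size $O(1/(q_{2n}^2 |x'-x|))$ and vanish as $n \to \infty$, while the leading ratio equals $-q_{2n-1}/q_{2n} \in (-1, 0)$. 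Hence $\gamma_n^{-1} x'$ is bounded for all sufficiently large $n$, and the remaining finitely many values are bounded trivially.

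\emph{Main obstacle.} The only nontrivial step is the case $x' \in \R \setminus \{x\}$, where one must carefully track the error terms produced by the convergent approximations. Because the partial quotients of $x$ are bounded, the two bounds $|q_m x - p_m| < 1/q_{m+1}$ and $q_{m+1} \le (M+1)q_m$ are enough to control these errors, so the argument reduces to an elementary algebraic manipulation with no new ideas beyond the continued-fraction dictionary used in Step 1.
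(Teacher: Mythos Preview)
Your argument is correct but takes a genuinely different, more elementary route than the paper. The paper's proof is geometric: it invokes the characterisation of badly approximable numbers in Proposition~\ref{prop:bad_appr} to see that for all sufficiently large $n$ the geodesic $\gamma_n^{-1} S_{x',x}$ misses the region $\{\mathrm{Im}\,z \ge h(x)+1\}$, hence has Euclidean radius at most $h(x)+1$; since one endpoint $\gamma_n^{-1} x = [k_{2n+1}, k_{2n+2}, \dots]$ is bounded by the bounded-partial-quotients hypothesis, the other endpoint $\gamma_n^{-1} x'$ is automatically bounded as well. Your approach bypasses the hyperbolic geometry entirely, reading off $\gamma_n^{-1} x'$ directly from the convergent matrix and the classical estimate $|q_m x - p_m| < 1/q_{m+1}$. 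The gain of your route is that it is self-contained and does not rely on Proposition~\ref{prop:bad_appr}; the gain of the paper's route is that it treats all $x'$ uniformly without a case split and ties in with the geodesic picture used throughout. One small remark: your final clause ``the remaining finitely many values are bounded trivially'' tacitly assumes $x'$ is not one of the even convergents $p_{2n}/q_{2n}$ (otherwise $\gamma_n^{-1}x'=\infty$ for that single $n$); the paper's proof has the same implicit caveat, and it is harmless for every application of the lemma.
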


\begin{proof}
	Let $ h(x) $ be the number defined in \cref{prop:bad_appr}.
	By \cref{prop:bad_appr}, we have
	\[
	\gamma_n^{-1} S_{x', x} \cap \{ z \in \bbH \mid \ImNew(z) \ge h(x) + 1 \}  = \emptyset 
	\]
	for all sufficiently large $ n $.
	Thus,  the radius of $ \gamma_n^{-1} S_{x', x} $ is less than $ h(x) + 1 $.
	Since coefficients of the continued fraction expansion of $ x $ are bounded, 
	$ \{ \gamma_n^{-1} x \mid n \in \Z_{>0} \} $ is bounded. 
	Thus,  $ \{ \gamma_n^{-1} x' \mid n \in \Z_{>0} \} $ is also bounded.
\end{proof}

\begin{figure}[hbtp]
	\centering
	\includegraphics[clip,width = 15.0cm]{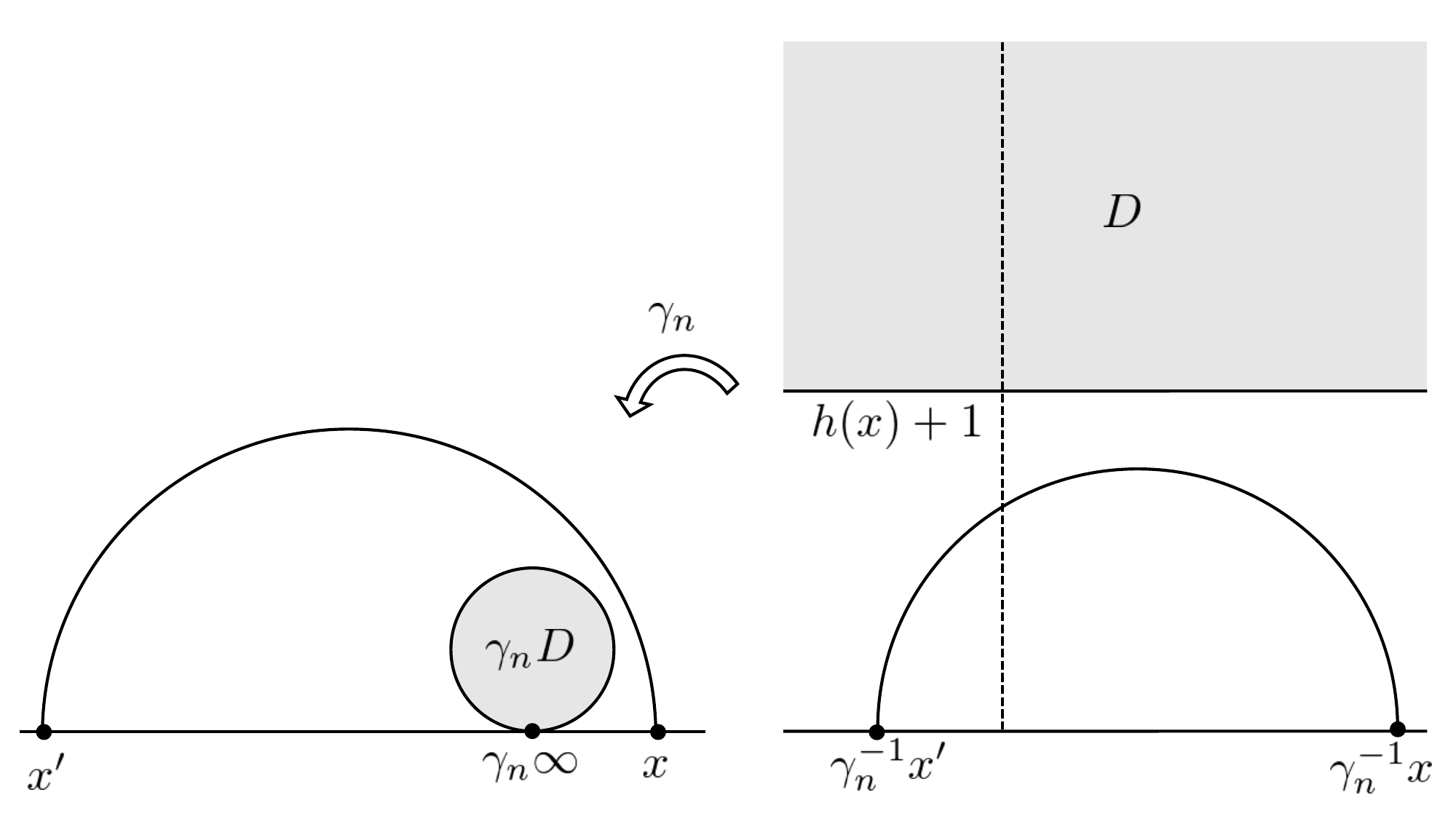}
	\caption{Geodesics in the proof of \cref{lem:C_w}}
	\label{fig:C_w}
\end{figure}

\begin{lem} \label{lem:triv_eval_integral}
	For an even word $ W $, a point $ z_0 \in \bbH $, and sequences $ \{x_n\}, \{x'_n\}, \{y_n\}, \{y'_n\} $, we have
	\[
	\int_{z_0}^{\gamma_W^{} z_0} j \left( \eta_{x_n', x_n} - \eta_{y_n', y_n} \right)
	= O(\abs{x_n - y_n} + \abs{x'_n - y'_n}).
	\]
\end{lem}

\begin{proof}
	Since 
	\[
	\abs{\dfrac{1}{z - x_n} - \dfrac{1}{z - y_n}}
	\le \frac{\abs{x_n - y_n}}{\min \{\ImNew(z_0), \ImNew(\gamma_W^{} z_0)\}}
	\]
	on the contour $ \{ t z_0 + (1-t) \gamma_W^{} z_0 \mid 0 \le t \le 1 \} $, it holds
	\[
	\abs{\int_{z_0}^{\gamma_W^{} z_0} j(z) \left( \dfrac{1}{z - x_n} - \dfrac{1}{z - y_n} \right)}
	\le \frac{\abs{x_n - y_n}}{\min \{\ImNew(z_0), \ImNew(\gamma_W^{} z_0)\}}
	\abs{\int_{z_0}^{\gamma_W^{} z_0} j(z) dz}
	= O(\abs{x_n - y_n}).
	\]
	Similar argument shows
	\[
	\abs{\int_{z_0}^{\gamma_W^{} z_0} j(z) \left( \dfrac{1}{z - x_n'} - \dfrac{1}{z - y_n'} \right)}
	= O(\abs{x_n' - y_n'}).
	\]
\end{proof}

A key result in this section is the following.

\begin{thm}[\cref{thm:main:extension} \ref{item:thm:main1} for $ \widehat{\val}(x) $ and $ \widehat{1}(x) $] \label{thm:new_word}
	Let $ x $ be a badly approximable number and $ W_1, W_2, \dots $ be an infinite sequence of even words such that $ x= [W_1 W_2 \cdots] $ and the set $ \{ W_1, W_2, \dots  \} $ is finite.
	Take a point $ z_0 \in \bbH $.
	Then the following statements hold.
	\begin{enumerate}
		\item \label{item:thm:new_word1}
		The sequences defining the limits
		\begin{align}
			\widehat{\val}(x) &:=
			\lim_{n \to \infty} \frac{1}{\abs{W_1} + \dots + \abs{W_n}}
			\int_{z_0}^{\gamma_{W_1 \cdots W_n}^{} z_0} j \eta_{\infty, x}, \\
			\widehat{1}(x) &:=
			\lim_{n \to \infty} \frac{1}{\abs{W_1} + \dots + \abs{W_n}}
			\int_{z_0}^{\gamma_{W_1 \cdots W_n}^{} z_0} \eta_{\infty, x}
		\end{align}
		is bounded.
		\item \label{item:thm:new_word2}
		The limits $ \widehat{\val}(x) $ and $ \widehat{1}(x) $ are independent of $ z_0 $.		
		\item \label{item:thm:new_word3}
		The limits $ \widehat{\val}(x) $ and $ \widehat{1}(x) $ are independent of $ \{ W_n \mid n \ge 1 \} $.		
		\item \label{item:thm:new_word4}
		The limits $ \widehat{\val}(x) $ and $ \widehat{1}(x) $ are $ \SL_2(\Z) $-invariant.
	\end{enumerate}
\end{thm}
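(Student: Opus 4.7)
The plan is to reduce all four claims to a single cocycle identity. Writing $\gamma_n := \gamma_{W_1 \cdots W_n}$ and $\gamma_0 := I$, path-additivity of line integrals together with the substitution $z = \gamma_{k-1} w$, the $\SL_2(\Z)$-invariance of $j$, and the transformation rule $\gamma^* \eta_{x',x} = \eta_{\gamma^{-1}x',\gamma^{-1}x}$ give
\[
\int_{z_0}^{\gamma_n z_0} j(z)\eta_{\infty,x}(z) = \sum_{k=1}^{n} \int_{z_0}^{\gamma_{W_k} z_0} j(w)\,\eta_{\gamma_{k-1}^{-1}\infty,\,\gamma_{k-1}^{-1}x}(w),
\]
and an analogous identity for the integrand without the factor $j$. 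Finiteness of $\{W_k\}$ makes $\{\gamma_{W_k} z_0\}$ range over a finite set of bounded paths in $\bbH$, and $x$ being badly approximable lets me apply \cref{lem:C_w} to both $x$ and $\infty$, placing $\{\gamma_{k-1}^{-1}\infty\}$ and $\{\gamma_{k-1}^{-1}x\}$ in bounded subsets of $\bbP^1(\R)$. Continuous dependence of the integrand on the pole locations (which stay in a compact subset of $\bbR$ disjoint from the path in $\bbH$) then bounds each summand uniformly by some constant $C$ independent of $k$.

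Claim \ref{item:thm:new_word1} is then immediate: $\abs{\int_{z_0}^{\gamma_n z_0}} \le nC \le C(\abs{W_1}+\cdots+\abs{W_n})$, using $\abs{W_k}\ge 1$. For \ref{item:thm:new_word2}, taking another base point $z_1\in\bbH$,
\[
\int_{z_0}^{\gamma_n z_0} - \int_{z_1}^{\gamma_n z_1} = \int_{z_0}^{z_1} + \int_{\gamma_n z_1}^{\gamma_n z_0};
\]
the first integral is a fixed constant, and the second, after pulling back by $\gamma_n$, equals $\int_{z_1}^{z_0} j(w)\eta_{\gamma_n^{-1}\infty,\,\gamma_n^{-1}x}(w)$, bounded uniformly in $n$ by \cref{lem:C_w}, so dividing by $\abs{W_1}+\cdots+\abs{W_n}\to\infty$ yields $0$. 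For \ref{item:thm:new_word3}, both decompositions induce the same continued fraction $[k_1,k_2,\ldots]$, so the partial products both have the form $\gamma_{(k_1,\ldots,k_{2L})}$; setting $a_L := \int_{z_0}^{\gamma_{(k_1,\ldots,k_{2L})}z_0}$, the cocycle identity applied to a single two-letter step $(k_{2L+1},k_{2L+2})$ bounds $\abs{a_{L+1}-a_L}$ uniformly (these pairs are from a finite set because $x$ is badly approximable). Finiteness of $\{W_k\}$ bounds $L_{n+1}-L_n = \abs{W_{n+1}}$ by some constant $K$, so an interpolation argument between $L_n$ and $L_{n+1}$ extends convergence of $a_{L_n}/L_n$ to convergence of $a_L/L$ along all integers $L$, and in particular along any other decomposition.

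For \ref{item:thm:new_word4}, let $y := \gamma x$ for $\gamma\in\SL_2(\Z)$ and write $y = [k_1',k_2',\ldots]$. The Lagrange equivalence theorem for continued fractions supplies integers $N,M$ with $\gamma\cdot\gamma_{(k_1,\ldots,k_n)} = \gamma_{(k_1',\ldots,k_m')}$ for all $n\ge N$, where $m = n + (M-N)$ (the parity of $M-N$ being forced even by determinants). Computing $\widehat{\val}(y)$ via the two-letter decomposition, substituting $z = \gamma w$ to transform the integrand to $j(w)\eta_{\gamma^{-1}\infty,x}(w)$, and using \ref{item:thm:new_word2}, \ref{item:thm:new_word3} to change the base point and to observe that $m/n\to 1$, the claim $\widehat{\val}(y) = \widehat{\val}(x)$ reduces to
\[
\lim_{n\to\infty} \frac{1}{n} \int_{z_0}^{\gamma_{(k_1,\ldots,k_{2n})} z_0} j(w)\bigl(\eta_{\gamma^{-1}\infty,x}(w) - \eta_{\infty,x}(w)\bigr) = 0.
\]
Applying the cocycle identity to this difference yields summands of the form $\int_{z_0}^{\gamma_{W_k} z_0} j(w')\,\eta_{(\gamma\gamma_{k-1})^{-1}\infty,\,\gamma_{k-1}^{-1}\infty}(w')$, in which the two poles coalesce: the M\"{o}bius distortion formula $\abs{\alpha p - \alpha q} = \abs{p-q}/\abs{(c'p+d')(c'q+d')}$ applied to $\alpha = \gamma_{k-1}^{-1}$, combined with $p_{2k-2}/q_{2k-2}\to x$, gives $\abs{(\gamma\gamma_{k-1})^{-1}\infty - \gamma_{k-1}^{-1}\infty} = O(1/q_{2k-2}^2)$, and the direct estimate $|\int_{z_0}^{\gamma_{W_k} z_0} j(w')\eta_{p,q}(w')| = O(\abs{p-q})$ on the bounded paths makes each summand $O(1/q_{2k-2}^2)$. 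Exponential growth of the denominators $q_i$ makes the series convergent, so the whole difference-integral is $O(1)$, and its $1/n$-average vanishes. The same argument with $j$ omitted establishes the $\SL_2(\Z)$-invariance of $\widehat{1}$. The main obstacle is this coalescence estimate, which combines the Lagrange equivalence of continued-fraction tails with the exponential growth of the denominators $q_i$.
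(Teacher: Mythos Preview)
Your argument is correct and is organized around the same cocycle identity the paper uses; parts \ref{item:thm:new_word1} and \ref{item:thm:new_word2} are essentially the paper's proof rewritten. Two differences are worth noting. For \ref{item:thm:new_word3}, the paper merely observes that the $\{W_n\}$-sequence is a subsequence of the two-letter sequence $\{V_n\}$, which by itself does not show that convergence for one decomposition forces convergence for another; your interpolation step (bounding $|a_{L+1}-a_L|$ and using $L_{n+1}-L_n\le K$) actually closes this gap and is an improvement on the paper's write-up. For \ref{item:thm:new_word4}, the paper takes the shorter route of reducing, via the common-tail property of $\SL_2(\Z)$-equivalent continued fractions, to the special case $\gamma=\gamma_{W_1\cdots W_k}$, where a single change of variables suffices. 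Your direct treatment of an arbitrary $\gamma$ via the coalescence estimate $|\gamma_{k-1}^{-1}(\gamma^{-1}\infty)-\gamma_{k-1}^{-1}\infty|=O(1/q_{2k-2}^2)$ works and in fact yields the stronger conclusion that $\int_{z_0}^{\gamma_n z_0} j(w)\,dw/(w-x')$ is $O(1)$ rather than $o(n)$, essentially reproving the paper's \cref{lem:x'_indep}. One small point: the M\"obius distortion formula $|\alpha p-\alpha q|=|p-q|/|(c'p+d')(c'q+d')|$ is stated for finite $p,q$, whereas you apply it with $q=\infty$; the correct limiting form is $|\alpha p-\alpha\infty|=1/|c'(c'p+d')|$, which indeed gives the claimed $O(1/q_{2k-2}^2)$ once you use $p_{2k-2}/q_{2k-2}\to x\neq\gamma^{-1}\infty$.
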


\begin{proof}
	For the first claim \ref{item:thm:new_word1}, let
	\begin{align}
		L &:= \lim_{n \to \infty} \frac{\abs{W_1} + \dots + \abs{W_n}}{2n}, \\
		a_{n}(z_0) &:= \int_{\gamma_{W_{1} \cdots W_{n-1}^{}z_0}}^{\gamma_{W_{1} \cdots W_n}^{} z_0} j \eta_{\infty, x}, \\
		K &:= \max \left\{ \abs{j(z)} \relmiddle| \abs{\ReNew(z)} \le 1/2, \abs{z} \le 1, \Im(z) \le h(x) \right\}, \\
		R &:= \min \{ \ImNew(z_0), \ImNew(\gamma_{W_{n}}^{} z_0) \mid n \in \Z_{>0} \}, \\
		M &:= \sup \{ \gamma_{W_1 \cdots W_n}^{-1}x, \gamma_{W_1 \cdots W_n}^{-1}\infty \mid n \in \Z_{>0} \}.
	\end{align}
	Here $ K $ and $ R $ are positive real numbers and $ L $ converges since the set $ \{ W_1, W_2, \dots \} $ is finite.
	Since $ x $ is badly approximable, $ M $ is a positive real number by \cref{lem:C_w}. 
	We can write
	\[
	\widehat{\val}(x) 
	= \frac{1}{L} \lim_{n \to \infty} \frac{a_{1}(z_0) + \dots + a_{n}(z_0)}{n}.
	\]
	For any positive integer $ n_0 $, we have
	\begin{align}
		\abs{ a_{1}(z_0) + \dots + a_{n}(z_0) }
		&\le \int_{z_0}^{\gamma_{W_{1} \cdots W_{n}}^{} z_0} \abs{j \eta_{\infty, x}} \\
		&= \int_{z_0}^{\gamma_{W_{1} \cdots W_{n}}^{} z_0} \abs{j(z) \frac{-1}{z-x} } \abs{dz} \\
		&\le \frac{2MK}{R^2} n
	\end{align}
	and thus the sequence $ \{ (a_{1}(z_0) + \dots + a_{n}(z_0))/n \}_{n \ge 1} $ is bounded.
	
	For \cref{item:thm:new_word2}, pick any other point $ z_0' \in \bbH $.
	We have
	\begin{align}
		\left( a_{1}(z_0) + \dots + a_{n}(z_0) \right) - \left( a_{1}(z'_0) + \dots + a_{n}(z'_0) \right) 
		&= \left( \int_{\gamma_{W_{1} \cdots W_n}^{} z_0'}^{\gamma_{W_{1} \cdots W_n}^{} z_0}
		- \int_{z_0'}^{z_0} \right) j \eta_{\infty, x}
		\\
		&= \int_{z_0'}^{z_0} j \left( \gamma_{W_{1} \cdots W_n}^{*} \eta_{\infty, x} - \eta_{\infty, x} \right)
		\\
		&= \int_{z_0'}^{z_0} j \left( \eta_{\gamma_{W_{1} \cdots W_n}^{-1} \infty, \gamma_{W_{1} \cdots W_n}^{-1} x} - \eta_{\infty, x} \right)
	\end{align}
	by \cref{lem:eta}.
	Since the most right-hand side is bounded by \cref{lem:C_w,lem:triv_eval_integral}, we have
	\[
	\lim_{n \to \infty} \frac{a_{1}(z_0) + \dots + a_{n}(z_0)}{n} 
	= \lim_{n \to \infty} \frac{a_{1}(z'_0) + \dots + a_{n}(z'_0)}{n}.
	\]
	
	As for \ref{item:thm:new_word3}, let $ x = [k_1, k_2, \dots], V_n := (k_{2n-1}, k_{2n}) $, and $ L_n := \abs{W_1} + \dots + \abs{W_n} $.
	Since $ \gamma_{W_{1} \cdots W_n}^{}  = \gamma_{V_{1} \cdots V_{L_n}}^{} $, we have
	\[
	\frac{1}{L_n} \int_{z_0}^{\gamma_{W_{1} \cdots W_n}^{} z_0}
	= \frac{1}{L_n} \int_{z_0}^{\gamma_{V_{1} \cdots V_{L_{n}}}^{} z_0}
	\]
	and thus the values $ \widehat{\val}(x) $ and $ \widehat{1}(x) $ are independent of $ \{ W_n \mid n \ge 1 \} $.	
	
	Finally, we prove \ref{item:thm:new_word4}.	
	Since $ \SL_2(\Z) $-equivalent real numbers have the same continued fraction expansions except for the first few terms, it suffices to show 
	$ \widetilde{f}(x) = \widetilde{f}(\gamma^{-1} x) $
	in the case when $ x= [W_1 W_2 \dots], \gamma = \gamma_{W_{1}}^{} \dots \gamma_{W_{k}}^{}  $.
	This follows from the fact that
	\[
	\int_{\gamma_{W_{1}}^{} \dots \gamma_{W_{m}}^{} z_0}^{\gamma_{W_{1}}^{} \dots \gamma_{W_{m+n+k}}^{} z_0} j \eta_{\infty, x}
	= \int_{\gamma_{W_{k+1}}^{} \dots \gamma_{W_{m}}^{} z_0}^{\gamma_{W_{k+1}}^{} \dots \gamma_{W_{m+n+k}}^{} z_0}
	j \eta_{\gamma^{-1} \infty, \gamma^{-1} x}
	\]
	for $ m>k $ is bounded by \cref{lem:C_w,lem:triv_eval_integral}.
\end{proof}

In \cref{thm:new_word} \ref{item:thm:new_word1}, we consider the differential form $ \eta_{\infty, x} $.
In fact, we can replace it by  $ \eta_{x', x} $ for any point $ x' \in \bbP^1(\R) \setminus \{ x \} $.
To prove it, we prepare the following lemma.

\begin{lem} \label{lem:x'_indep}
	In the setting of Theorem $ \ref{thm:new_word} $, let $ \{x'_n\}_{n=1}^\infty \subset \R $ be a bounded sequence such that $ x $ is not an accumulation point.
	Then the sequence
	\[
	\int_{z_0}^{\gamma_{W_1 \cdots W_n}^{} z_0} j(z) \frac{dz}{z - x'_n}
	\]
	is bounded.
\end{lem}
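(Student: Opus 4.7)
The plan is to exploit the telescoping decomposition
\[
I_n := \int_{z_0}^{\gamma_{W_1 \cdots W_n} z_0} j(z) \frac{dz}{z - x'_n} = \sum_{k=1}^{n} a_k, \qquad a_k := \int_{\gamma_{k-1} z_0}^{\gamma_k z_0} j(z) \frac{dz}{z - x'_n},
\]
along the partial products $\gamma_k := \gamma_{W_1 \cdots W_k}$, and then to pull each summand back by $\gamma_{k-1}$. Using $j(\gamma_{k-1} w) = j(w)$, the identification $dz/(z - x'_n) = \eta_{x'_n, \infty}(z)$, and the transformation law $\gamma^* \eta_{u, v} = \eta_{\gamma^{-1} u, \gamma^{-1} v}$, each piece becomes
\[
a_k = \int_{z_0}^{\gamma_{W_k} z_0} j(w) \, \eta_{\gamma_{k-1}^{-1} x'_n, \, \gamma_{k-1}^{-1} \infty}(w),
\]
an integral over one of the finitely many contours $\{[z_0, \gamma_W z_0] : W \in \{W_i\}\}$.

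The key algebraic observation is the identity
\[
\gamma_{k-1}^{-1} x'_n - \gamma_{k-1}^{-1} \infty = \frac{1}{c_{k-1}^{2} \, (\gamma_{k-1}\infty - x'_n)},
\]
where $c_{k-1}$ denotes the bottom-left entry of $\gamma_{k-1} \in \SL_2(\Z)$, equivalently the denominator of the convergent $\gamma_{k-1}\infty = [W_1 \cdots W_{k-1}]$ of $x$. Since $\gamma_{k-1}\infty \to x$ while the hypothesis supplies a uniform $\delta > 0$ with $|x'_n - x| \ge \delta$, there is a threshold $k_0$ (independent of $n$) such that $|\gamma_{k-1}\infty - x'_n| \ge \delta/2$ for every $k > k_0$. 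Applying \cref{lem:triv_eval_integral} to $a_k$ with the trivial comparison pair $(y_n, y'_n) := (\gamma_{k-1}^{-1}\infty, \gamma_{k-1}^{-1}\infty)$, so that $\eta_{y'_n, y_n} \equiv 0$, will then yield
\[
|a_k| = O\bigl(|\gamma_{k-1}^{-1} x'_n - \gamma_{k-1}^{-1} \infty|\bigr) = O(c_{k-1}^{-2})
\]
uniformly in $n$ and in $k > k_0$, with implicit constant depending only on the finite family $\{W_i\}$.

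For the finitely many remaining indices $k \le k_0$, the elementary inequality $|z - x'_n| \ge \ImNew(z)$ on the contour $[\gamma_{k-1} z_0, \gamma_k z_0]$ immediately bounds $|a_k|$ by a quantity independent of $n$. Since the denominators $c_k$ grow at least as Fibonacci numbers, the tail $\sum_{k > k_0} c_{k-1}^{-2}$ is finite, and we conclude
\[
|I_n| \le \sum_{k=1}^{k_0} |a_k| + C \sum_{k > k_0} c_{k-1}^{-2},
\]
bounded uniformly in $n$, as required. The main obstacle is securing uniformity in $n$ of both the threshold $k_0$ and of the pole-coalescence estimate; this is exactly what the hypothesis that $x$ is not an accumulation point of $\{x'_n\}$ guarantees through the uniform lower bound $|x'_n - x| \ge \delta$.
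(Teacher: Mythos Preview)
Your proof is correct and takes a genuinely different route from the paper's. The paper argues geometrically in two lines: it integrates along the geodesic $S_{z_0,\gamma_{W_1\cdots W_n}z_0}$, bounds $|j|$ by a constant $K$ coming from the compactness of $\overline{\pi(S_{z_0,x})}$ (Proposition~\ref{prop:bad_appr}), and then dominates the remaining integral by $\int_{S_{z_0,x}}|dz|/|z-x'_n|$, which is finite because the geodesic has finite Euclidean arc length and $x'_n$ stays uniformly away from its single real endpoint~$x$. Your argument is arithmetic rather than geometric: you telescope, pull each piece back by $\gamma_{k-1}$, and exploit the exact identity $\gamma_{k-1}^{-1}x'_n-\gamma_{k-1}^{-1}\infty=1/\bigl(c_{k-1}^{2}(\gamma_{k-1}\infty-x'_n)\bigr)$ together with Lemma~\ref{lem:triv_eval_integral} to obtain the summable bound $|a_k|=O(c_{k-1}^{-2})$. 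The paper's route is shorter and more conceptual, but as written it leaves implicit why $|j|\le K$ on the geodesic $S_{z_0,\gamma_n z_0}$ (which is not $S_{z_0,x}$) and why the second displayed inequality holds; your route is longer but makes every constant explicit, stays entirely within the toolkit already set up, and yields a quantitative decay rate. One small point: ``$x$ is not an accumulation point'' only gives $|x'_n-x|\ge\delta$ for all but finitely many $n$; the finitely many exceptional $I_n$ are individually finite integrals over compact contours in $\bbH$, so this is harmless, but it is worth saying explicitly.
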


\begin{proof}
	Let
	\[
	K := \max \left\{ \abs{j(z)} \relmiddle| \abs{\ReNew(z)} \le 1/2, \abs{z} \le 1, \Im(z) \le h(x) \right\}.
	\]
	Then
	\begin{align}
		\abs{\int_{z_0}^{\gamma_{W_1 \cdots W_n}^{} z_0} j(z) \frac{dz}{z - x'_n}}
		\le K \int_{S_{z_0, \gamma_{W_{1} \cdots W_{n}}^{} z_0}} \frac{\abs{dz}}{\abs{z - x'_n}} \\
		\le K \int_{S_{z_0, x}} \frac{\abs{dz}}{\abs{z - x'_n}}.
	\end{align}
\end{proof}

By \cref{lem:x'_indep}, we can reformulate the definition of $ \widehat{\val}(x) $ in \cref{thm:new_word} \ref{item:thm:new_word1} as follows.

\begin{prop} \label{prop:x'_indep}
	In the setting of Theorem $ \ref{thm:new_word} $, let $ \{x'_n\}_{n=1}^\infty \subset \R $ be a bounded sequence such that $ x $ is not an accumulation point.
	Then 
	\[
	\widehat{\val}(x) =
	\lim_{n \to \infty} \frac{1}{\abs{W_1} + \dots + \abs{W_n}}
	\int_{z_0}^{\gamma_{W_1 \cdots W_n}^{} z_0} j \eta_{x'_n, x}^{}
	\]
	holds.
\end{prop}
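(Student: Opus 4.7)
The plan is to reduce this proposition directly to Lemma~\ref{lem:x'_indep}, since $\eta_{x'_n, x}$ differs from the integrand $-dz/(z-x)$ used in the definition of $\widehat{\val}(x)$ by precisely the term $dz/(z-x'_n)$ that is controlled by that lemma.

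First, I would split the integrand by writing
\[
\int_{z_0}^{\gamma_{W_1 \cdots W_n}^{} z_0} j \, \eta_{x'_n, x}
= \int_{z_0}^{\gamma_{W_1 \cdots W_n}^{} z_0} j(z)\,\frac{dz}{z - x'_n}
+ \int_{z_0}^{\gamma_{W_1 \cdots W_n}^{} z_0} j(z)\,\frac{-dz}{z - x}.
\]
After dividing both sides by $|W_1| + \dots + |W_n|$, the second term on the right converges to $\widehat{\val}(x)$ by the definition given in Theorem~\ref{thm:new_word}~\ref{item:thm:new_word1}. So it remains to show that the first term, divided by $|W_1| + \dots + |W_n|$, tends to $0$.

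Second, I would invoke Lemma~\ref{lem:x'_indep} with the given bounded sequence $\{x'_n\}$ (the hypothesis that $x$ is not an accumulation point of $\{x'_n\}$ is exactly the hypothesis of that lemma). This yields that
\[
\int_{z_0}^{\gamma_{W_1 \cdots W_n}^{} z_0} j(z)\,\frac{dz}{z - x'_n}
\]
is a \emph{bounded} sequence in $n$. Meanwhile $|W_1| + \dots + |W_n| \to \infty$: since $x$ is irrational, the concatenation $W_1 W_2 \cdots$ must be infinite, so infinitely many $W_i$ are non-empty, forcing the partial sums of $|W_i|$ to diverge. Dividing a bounded sequence by one tending to infinity gives limit $0$.

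Combining these two observations gives the claimed identity. There is essentially no obstacle: the proposition is a straightforward corollary of Lemma~\ref{lem:x'_indep} together with the definition of $\widehat{\val}(x)$. The only conceptual point worth stating is the divergence of $|W_1| + \dots + |W_n|$, which I would justify in one sentence by the irrationality of $x$ (or by noting that otherwise the limit defining $\widehat{\val}(x)$ itself would be meaningless).
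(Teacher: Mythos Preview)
Your proposal is correct and matches the paper's approach exactly: the paper does not even write out a proof for this proposition, merely stating ``By Lemma~\ref{lem:x'_indep}, we can reformulate the definition of $\widehat{\val}(x)$ \ldots'', so the splitting of $\eta_{x'_n,x}$ into the $-dz/(z-x)$ term and the bounded $dz/(z-x'_n)$ term is precisely the intended (and only natural) argument. One minor remark: since $\widehat{\val}(x)$ is not assumed to converge in this setting, the cleanest phrasing is that the difference of the two sequences tends to $0$, hence they share the same set of limit values---your argument already gives this.
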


Finally, we prove \cref{thm:main:extension} \ref{item:thm:main3}.

\begin{thm}[\cref{thm:main:extension} \ref{item:thm:main3}] \label{pro:coincidence}
	In the setting of Theorem $ \ref{thm:new_word} $, for any point $ x' \in \bbP^1(\R) \setminus \{ x \} $ and a sequence $ \{ x_n \}_{n=1}^\infty $ of real numbers such that $ z_n := \gamma_{W_1 \cdots W_n}^{} (x_n + \iu) \in S_{\infty, x} $ and $ z_n \to x $, we have
	\begin{align}
		\widehat{\val}(x) &= \lim_{n \to \infty} \frac{1}{\abs{W_1} + \dots + \abs{W_n}} \int_{z_0}^{z_n}  j \eta_{x', x}, \\
		\widehat{1}(x) &= \lim_{n \to \infty} \frac{1}{\abs{W_1} + \dots + \abs{W_n}} \int_{z_0}^{z_n} \eta_{x', x}.
	\end{align}
\end{thm}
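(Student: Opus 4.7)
The plan is to show that the numerators of the two limits differ by a quantity bounded uniformly in $n$; dividing by $|W_1|+\cdots+|W_n|\to\infty$ (which diverges because $x$ is irrational) then yields the equality. I focus on $\widehat{\val}(x)$; the proof for $\widehat{1}(x)$ is strictly easier, as no bound on $j$ is needed.

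Writing $\gamma_n := \gamma_{W_1\cdots W_n}$, the identity $\eta_{x',x} - \eta_{\infty,x} = dz/(z-x')$ together with holomorphy of the integrands in $\bbH$ (so contours may be deformed freely) allows the decomposition
\[
\int_{z_0}^{\gamma_n z_0} j\,\eta_{\infty,x} \;-\; \int_{z_0}^{z_n} j\,\eta_{x',x}
\;=\; -\int_{\gamma_n z_0}^{z_n} j\,\eta_{\infty,x} \;-\; \int_{z_0}^{z_n} \frac{j(z)}{z-x'}\,dz.
\]
In the last integral I further split the path through $\gamma_n z_0$; the portion from $z_0$ to $\gamma_n z_0$ is bounded uniformly in $n$ by \cref{lem:x'_indep} applied with the constant sequence $x'_n\equiv x'$ (permissible since $x'\ne x$).

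There remain two contributions of the form $\int_{\gamma_n z_0}^{z_n}(\cdot)$. Using $j(\gamma_n w)=j(w)$ and $\gamma_n^{*}\eta_{\alpha,\beta}=\eta_{\gamma_n^{-1}\alpha,\gamma_n^{-1}\beta}$, the change of variables $z\mapsto\gamma_n z$ rewrites them as
\[
\int_{z_0}^{x_n+\iu} j\,\eta_{\gamma_n^{-1}\infty,\,\gamma_n^{-1}x} \quad\text{and}\quad \int_{z_0}^{x_n+\iu} j\,\eta_{\gamma_n^{-1}x',\,\gamma_n^{-1}\infty},
\]
respectively (the latter vanishes identically when $x'=\infty$). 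By \cref{lem:C_w}, the sequences $\{\gamma_n^{-1}\infty\}_n$, $\{\gamma_n^{-1}x\}_n$, $\{\gamma_n^{-1}x'\}_n$ are all bounded; since $x_n+\iu$ lies on the geodesic $S_{\gamma_n^{-1}\infty,\,\gamma_n^{-1}x}$, the real numbers $x_n$ are bounded as well. Choosing the Euclidean line segment from $z_0$ to $x_n+\iu$ as contour, that segment stays in a compact subset of $\bbH$ whose imaginary part is bounded below by $\min(\ImNew(z_0),1)$ uniformly in $n$. Hence $j$ is uniformly bounded on it, and the poles of each $\eta$-form (all real) lie uniformly outside it; so each integral is $O(1)$ in $n$.

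Summing these contributions, the total difference between the two numerators is $O(1)$, and dividing by $|W_1|+\cdots+|W_n|\to\infty$ yields the desired equality for $\widehat{\val}(x)$. The case of $\widehat{1}(x)$ follows from the same argument with the factor $j$ omitted. The main bookkeeping obstacle is tracking the various poles of the $\eta$-forms after the substitution $z\mapsto\gamma_n z$ and verifying that the straight-line contour to $x_n+\iu$ remains uniformly separated from them in $\bbH$; this is precisely what \cref{lem:C_w} delivers via the boundedness of $x_n$.
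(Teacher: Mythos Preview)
Your proof is correct and follows essentially the same route as the paper: both arguments show that the numerators differ by $O(1)$ via the change of variables $z\mapsto\gamma_n z$, together with \cref{lem:C_w} to bound $x_n$ and the pulled-back poles, and \cref{lem:x'_indep} (equivalently \cref{prop:x'_indep}) to handle the switch from $\eta_{\infty,x}$ to $\eta_{x',x}$. The only organizational difference is that the paper first invokes \cref{prop:x'_indep} to replace $\eta_{\infty,x}$ by $\eta_{x',x}$ throughout and then bounds a single remainder $\int_{\gamma_n\iu}^{z_n} j\,\eta_{x',x}=\int_{\iu}^{x_n+\iu} j\,\eta_{\gamma_n^{-1}x',\gamma_n^{-1}x}$, whereas you split the difference into three pieces and treat them separately; the substance is the same.
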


\begin{proof}
	We prove only the first equality.
	Let $ \gamma_n := \gamma_{W_1 \cdots W_n}^{} $. 
	By \cref{prop:x'_indep}, we have
	\[
	\widehat{\val}(x) = \lim_{n \to \infty} \frac{1}{\abs{W_1} + \dots + \abs{W_n}} 
	\int_{\iu}^{\gamma_n \iu} j \eta_{x', x}.
	\]
	Take a point $ z_0 \in S_{x', x} $ such that $ \pi(z_0) \in \pi(\{ z \in \bbH \mid 1 \le \ImNew(z) \le h(x) \}) $.
	Then we have
	\[
	\widehat{\val}(x) = \lim_{n \to \infty} \frac{1}{\abs{W_1} + \dots + \abs{W_n}} 
	\int_{z_0}^{\gamma_n \iu} j \eta_{x', x}.
	\]
	Since $ x_n + \iu \in \gamma_n^{-1}(S_{x', x}) $, we have
	$ \abs{x_n} \le \max \{ \abs{\gamma_n^{-1} x}, \abs{\gamma_n^{-1} x'} \} $.
	Thus,  $ \abs{x_n} $ is bounded by \cref{lem:C_w}. 
	Since
	\[
	\abs{ d(z_n, z_0) - d(\gamma_n \iu, z_0) }
	\le d(z_n, \gamma_n \iu)
	\le d(x_n + \iu, \iu) 
	= 1 + \frac{x_n^2}{2}
	\]
	by the triangle inequality and \cref{lem:distance} \cref{item:lem:distance2},
	$ d(z_n, z_0) - d(\gamma_n \iu, z_0) $ is bounded. 
	Let
	\[
	K := \max \left\{ \abs{j(z)} \relmiddle| \abs{\ReNew(z)} \le 1/2, \abs{z} \le 1, \Im(z) \le h(x) \right\}.
	\]
	Then
	\begin{align}
		\left( \int_{z_0}^{z_n} - \int_{z_0}^{\gamma_n \iu} \right) j \eta_{x', x} 
		&= \int_{\iu}^{x_n + \iu} j \eta_{\gamma_n^{-1}x', \gamma_n^{-1}x} \\
		&\le K \int_{\iu}^{x_n + \iu} \frac{\abs{\gamma_n^{-1}x - \gamma_n^{-1}x'}}{(\min\{\ImNew(x_n + \iu), \ImNew(\iu)\})^2} dz \\
		&\le K \abs{\gamma_n^{-1}x - \gamma_n^{-1}x'} x_n 
	\end{align}
	is bounded by \cref{lem:C_w}. 
	Thus,  we obtain
	\[
	\widehat{\val}(x) = \lim_{n \to \infty} \frac{1}{\abs{W_1} + \dots + \abs{W_n}} \int_{z_0}^{z_n}  j \eta_{x', x}.
	\]
\end{proof}


\section{Elementary units for badly approximable numbers} \label{sec:elem_unit}


In this section, we consider an analog of elementary units for badly approximable numbers and prove \cref{thm:main:extension} \ref{item:thm:main4} and \ref{item:thm:main5}.

\begin{thm}[\cref{thm:main:extension} \ref{item:thm:main4} and \ref{item:thm:main5}] \label{thm:2log}
	Let $ x $ be a badly approximable number and $ W_1, W_2, \dots $ be an infinite sequence of even words such that $ x= [W_1 W_2 \cdots] $ and the set $ \{ W_1, W_2, \dots  \} $ is finite.
	Let $ c_n \in \Z_{>0} $ be the denominator of a rational number $ [W_1 \cdots W_n] $ and
	\[
	L := \lim_{n \to \infty} \frac{\abs{W_1} + \dots + \abs{W_n}}{n}.
	\]
	Then the limit
	\[
	\veps_x := \lim_{n \to \infty} c_n^{1/Ln}
	\]
	converges if and only if $ \widehat{1}(x) $ converges.
	In that case, it holds $ \widehat{1}(x) = 2 \log \veps_x $.		
\end{thm}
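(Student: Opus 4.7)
The plan is to evaluate the integral defining $\widehat{1}(x)$ directly by antidifferentiation and then extract $\log c_n$ from the resulting boundary expression using the standard matrix identity for the convergent matrix $\gamma_n := \gamma_{W_1 \cdots W_n}$. Since $x \in \R$, the integrand $-1/(z-x)$ is holomorphic on $\bbH$, which maps biholomorphically onto $\bbH - x$ via translation; on the latter a branch of $\log$ is available with imaginary part in $(0,\pi)$. Consequently
\[
\int_{z_0}^{\gamma_n z_0} \frac{-dz}{z-x} = \log(z_0 - x) - \log(\gamma_n z_0 - x),
\]
and the whole proof reduces to estimating $|\gamma_n z_0 - x|$.

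For the estimate, write $\gamma_n = \begin{pmatrix} p_n & * \\ c_n & d_n \end{pmatrix}$, so $p_n/c_n = [W_1 \cdots W_n]$ is a convergent of $x$ and $d_n$ is the denominator of the preceding convergent. Since $\det \gamma_n = 1$, a direct computation yields
\[
\gamma_n z_0 - \frac{p_n}{c_n} = \frac{-1}{c_n(c_n z_0 + d_n)}.
\]
Because $x$ is badly approximable, the partial quotients are bounded, so consecutive convergent denominators satisfy $c_{n\pm 1}/c_n = \Theta(1)$; in particular $d_n/c_n$ is bounded, giving $|c_n z_0 + d_n| = \Theta(c_n)$ and therefore $|\gamma_n z_0 - p_n/c_n| = \Theta(c_n^{-2})$. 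Together with the classical bound $|p_n/c_n - x| \le 1/(c_n c_{n+1}) = O(c_n^{-2})$ and the lower bound $|\gamma_n z_0 - x| \ge \Im(\gamma_n z_0) = y_0/|c_n z_0 + d_n|^2 = \Omega(c_n^{-2})$, one concludes
\[
|\gamma_n z_0 - x| = \Theta(c_n^{-2}), \qquad \log|\gamma_n z_0 - x| = -2 \log c_n + O(1).
\]

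Now $\log(z_0 - x)$ is a fixed constant, and the imaginary parts of both $\log(z_0 - x)$ and $\log(\gamma_n z_0 - x)$ lie in $(0,\pi)$, so the imaginary part of the integral is $O(1)$ and
\[
\int_{z_0}^{\gamma_n z_0} \frac{-dz}{z-x} = 2 \log c_n + O(1).
\]
Dividing by $|W_1| + \cdots + |W_n| \to \infty$, the bounded error vanishes, so the sequence $\widehat{1}_n := (|W_1|+\cdots+|W_n|)^{-1} \int_{z_0}^{\gamma_n z_0} -dz/(z-x)$ differs from $2 \log c_n / (|W_1|+\cdots+|W_n|)$ by $o(1)$. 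Using $(|W_1|+\cdots+|W_n|)/n \to L$, this in turn differs from $(2/L)(\log c_n)/n = 2 \log(c_n^{1/Ln})$ by $o(1)$. Hence $\widehat{1}(x)$ converges if and only if $\veps_x$ does, and in that case $\widehat{1}(x) = 2 \log \veps_x$, which is in particular real.

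The main obstacle is Step 2, namely verifying that all of $c_{n+1}/c_n$, $d_n/c_n$, and thereby $|c_n z_0 + d_n|/c_n$ are bounded above and below. This is where the badly-approximable hypothesis enters in an essential way through the recursion $c_{m+1} = k_{m+1} c_m + c_{m-1}$ with bounded $k_{m+1}$; once these ratios are controlled, the rest of the argument is a straightforward comparison of limits and extraction of the real part.
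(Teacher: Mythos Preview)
Your proof is correct and follows essentially the same route as the paper: antidifferentiate $-1/(z-x)$, reduce to showing $\log|\gamma_n z_0 - x| = -2\log c_n + O(1)$, and use the bounded-partial-quotients hypothesis to control $d_n/c_n$. The only cosmetic difference is that the paper obtains the key estimate via the algebraic factorization $\gamma_n z_0 - x = (z_0 - \gamma_n^{-1}x)\big/\big((c_n z_0 + d_n)(c_n\gamma_n^{-1}x + d_n)\big)$ together with the earlier geometric Lemma~\ref{lem:C_w} (applied to $x' = \infty$ to bound $-d_n/c_n$), whereas you go through the convergent $p_n/c_n$ and the imaginary-part lower bound using the elementary recursion directly.
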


Here we call $ \veps_x $ the elementary units for each badly approximable number $ x $.
Before giving a proof, we state a remark.

\begin{rem}
	In the situation in \cref{thm:2log}, we have
	$ \veps(W_1, W_2, \dots ) \ge (3 + \sqrt{5})/2 $.
\end{rem}

\begin{proof}[Proof of Theorem $ \ref{thm:2log} $]
	Let
	\[
	\gamma_n := \gamma_{W_1 \cdots W_n}^{}
	= \pmat{* & * \\ c_n & d_n}.
	\]
	Here $ c_n $ is a denominator of the rational number
	$ \gamma_n(\infty) = [W_1 \cdots W_n] $.
	Pick any point $ z_0 \in \bbH $. 
	Then we have
	\begin{align}
		\widetilde{1}(x)
		= \lim_{n \to \infty} \frac{1}{n}
		\int_{z_0}^{\gamma_n^{} z_0} \frac{-1}{z - x} dz 
		= \lim_{n \to \infty} \frac{1}{n}
		\biggl[ - \log(z - x) \biggr]_{z_0}^{\gamma_n^{} z_0}.
	\end{align}
	Since $ \gamma_n z_0 $ converges to $ x $ as $ n \to \infty $ and the argument of $ \log $ is bounded, this limit value is equal to
	\begin{align}
		\lim_{n \to \infty} \frac{-1}{n} \log \abs{\gamma_n z_0 - x} 
		= \lim_{n \to \infty} \frac{-1}{n}	\log 
		\abs{\frac{z_0 - \gamma_n^{-1} x}
			{(c_n z_0 + d_n) (c_n \gamma_n^{-1} x + d_n)}}.
	\end{align}
	This is equal to
	\[
	\lim_{n \to \infty} \frac{1}{n} \log
	\abs{(c_n z_0 + d_n) (c_n \gamma_n^{-1} x + d_n)} 
	\]
	since $ x $ is badly approximable and $ \{ \gamma_n^{-1} x \mid n \in \Z_{>0} \} $ is bounded by \cref{lem:C_w}.
	Since $ \gamma_n^{-1} \infty = - d_n / c_n $ is bounded by \cref{lem:C_w}, 
	\[
	\left\{ \frac{c_n z + d_n}{c_n \gamma_n^{-1} x + d_n} \right\}_{ n \in \Z_{>0} }
	\]
	is bounded for any point $ z \in \bbH \cup \R $.
	Thus,  we obtain
	\[
	\widehat{1}(x)
	= \lim_{n \to \infty} \frac{2}{n} \log \abs{c_n z + d_n}.
	\]
	By substituting $ z = 0 $, we have
	\[
	\widehat{1}(x)
	= \lim_{n \to \infty} \frac{2}{n} \log d_n.
	\]
	Since $ d_n / c_n $ is bounded, we obtain
	\[
	\widehat{1}(x)
	= \lim_{n \to \infty} \frac{2}{n} \log c_n
	= 2 \log \veps_x.
	\]
\end{proof}

\begin{ex}
	For the case when $ x = \phi := (1 + \sqrt{5})/2 =[1, 1, \dots] $, since
	\[
	[\underbrace{1, \dots, 1}_{2n}] = \frac{F_{2n+1}}{F_{2n}}
	\]
	where $ F_n $ denotes the $ n $-th Fibonacci number, we have
	\begin{align}
		\veps_\phi 
		&= \lim_{n \to \infty} \sqrt[n]{F_{2n}} 
		= \lim_{n \to \infty} \sqrt[n]{\frac{1}{\sqrt{5}} \left( \phi^{2n} - (-\phi)^{2n} \right)} 
		= \phi^2
		= \frac{3 + \sqrt{5}}{2}.
	\end{align}
\end{ex}


\section{An explicit computation of values of extended $ \val $ function} \label{sec:explicit_computation}


In this section, we prove \cref{thm:main:realize_lim}.
The most important point of the proof below is the following lemma which is based on the proof of~\cite[Theorem 4.3]{BI} and is a generalization of~\cite[Lemma 3.3]{Mura}.

\begin{lem}[Repetition frequency estimation] \label{lem:BI_eval_integral}
	Let $\{ a_{1, n} \}_{n=1}^{\infty}$, $ \dots $, $ \{ a_{k, n} \}_{n=1}^{\infty}$ be sequences in $ \Z_{\ge 0} $ such that $ a_{i, n} \to \infty $ and $ 2^{-n} a_{i, n} \to 0 $ as $ n \to \infty $.
	Let $ V $ be a non-empty even word and put $ v := [\overline{V}] $.
	Let $ V_n $ and $ W_n $ be non-empty even words such that $ V_n $ and $ V_{n+1} $, $ W_n $ and $ W_{n+1} $ coincide in the first $ n $ terms respectively.
	Let $\{ x_{n} \}_{n=1}^{\infty} $ and $ \{ x'_{n} \}_{n=1}^{\infty} $ be sequences of real numbers whose continued fraction expansions are
	\[
	x_{n} = [V^{a_n} V_n \cdots], \quad
	-x'_{n} = [0, W_n \cdots]
	\]
	respectively.
	Then 
	\[
	I_n := 
	\int_{z_0}^{\gamma_{V}^{a_n} z_0} j \eta_{x'_n, x_n}^{}
	- a_n N(V) \widehat{\val} (v)
	\]
	is a Cauchy sequence. 
	In particular, we have
	\[
	\int_{z_0}^{\gamma_{V}^{a_n} z_0} j \eta_{x'_n, x_n}^{}
	= a_n N(V) \widehat{\val} (v) + O(1) \quad
	\text{ as } n \to \infty.
	\]
\end{lem}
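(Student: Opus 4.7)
The plan is to slice the integral $\int_{z_0}^{\gamma_V^{a_n} z_0} j\, \eta_{x'_n, x_n}$ into $a_n$ one-step blocks under the $\gamma_V$-action and compare each block against a fixed model integral associated with the purely periodic point $v$. Using $j \circ \gamma = j$ and $\gamma^* \eta_{x', x} = \eta_{\gamma^{-1} x', \gamma^{-1} x}$, I would telescope
\[
\int_{z_0}^{\gamma_V^{a_n} z_0} j\, \eta_{x'_n, x_n} = \sum_{k=0}^{a_n - 1} \int_{z_0}^{\gamma_V z_0} j\, \eta_{\gamma_V^{-k} x'_n,\, \gamma_V^{-k} x_n},
\]
and invoke \cref{lem:word_rep} (applied with $r=0$, $s=1$, $W_1 = V$) to identify $a_n$ copies of the model integral $\int_{z_0}^{\gamma_V z_0} j\, \eta_{v', v}$ with $a_n N(V) \widehat{\val}(v)$. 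The task then reduces to showing that $R_n := \sum_{k=0}^{a_n - 1} \delta_{n, k}$ is Cauchy, where $\delta_{n, k}$ is the $k$-th block minus the model.

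Key to the estimates is the linearity of $\eta_{x', x}$ in its two arguments, allowing one to write $\delta_{n, k} = \delta^{(1)}_{n, k} - \delta^{(2)}_{n, k}$ where
\[
\delta^{(1)}_{n, k} := \int_{z_0}^{\gamma_V z_0} j\left(\frac{1}{z - \gamma_V^{-k} x'_n} - \frac{1}{z - v'}\right) dz
\]
and $\delta^{(2)}_{n, k}$ is the analogue with the non-prime coordinates. Two one-sided exponential decay estimates are then in play: by \cref{lem:triv_eval_integral}, $|\delta^{(1)}_{n, k}| \le C\lambda^k$, which follows from $\gamma_V^{-1}$ being hyperbolic with attracting fixed point $v'$ while $x'_n \in (-1, 0]$ stays uniformly bounded away from $v > 1$; and $|\delta^{(2)}_{n, k}| \le C\lambda^{a_n - k}$, which follows from the identity $\gamma_V^{-k} x_n = [V^{a_n - k} V_n \cdots]$ together with standard bounds on convergent denominators showing that this quantity and $v = [\overline V]$ agree in their first $2(a_n - k) |V|$ continued-fraction digits. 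One may take $\lambda = \veps_v^{-2} \le \phi^{-4} < 1/2$. Summing these geometric series already yields $\sum_k |\delta_{n, k}| = O(1)$, giving the ``in particular'' clause.

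The main obstacle is upgrading boundedness to the Cauchy property. The plan is to show that $\sum_{k=0}^{a_n - 1} \delta^{(1)}_{n, k}$ converges as $n \to \infty$ to an absolutely convergent series, and similarly for $\sum_k \delta^{(2)}_{n, k}$ after reindexing by $j = a_n - k$. For each fixed $k$, the hypothesis that $W_n$ and $W_{n+1}$ agree in their first $n$ continued-fraction digits gives $|x'_n - x'_{n+1}| = O(\lambda^n)$, which passes through the $k$-fold contraction of $\gamma_V^{-1}$ near $v'$ to yield $|\delta^{(1)}_{n, k} - \delta^{(1)}_{n+1, k}| = O(\lambda^{n+k})$, so $\delta^{(1)}_{n, k}$ converges to a limit $\delta^{(1, \infty)}_k$ with $|\delta^{(1, \infty)}_k| \le C\lambda^k$; a symmetric argument using the agreement of $V_n, V_{n+1}$ handles $\delta^{(2)}$. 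The hypothesis $2^{-n} a_{i, n} \to 0$ combined with $\lambda < 1/2$ controls the remaining bulk error $O(a_n \lambda^n)$, so the two absolutely convergent limits add up to a genuine limit for $R_n$. Consequently $R_n$ converges in $\R$, hence is Cauchy, as required. The overall approach refines the method of~\cite[Theorem 4.3]{BI} by exploiting the $V^{a_n}$ block structure, which is what exposes the geometric contraction.
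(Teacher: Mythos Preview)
Your approach is correct and shares the essential mechanism with the paper's proof: telescope the integral into $a_n$ one-step blocks over $[z_0,\gamma_V z_0]$, compare each with the model term $\int_{z_0}^{\gamma_V z_0} j\,\eta_{v',v}$, and control the discrepancies via exponential decay coming from continued-fraction agreement (equivalently, the hyperbolic contraction of $\gamma_V^{-1}$ toward $v'$).

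The difference lies in how the Cauchy property is extracted. You pass through a limit: show each $\delta^{(1)}_{n,k}$ and (after reindexing) each $\delta^{(2)}_{n,a_n-j}$ converges as $n\to\infty$, with uniform geometric bounds $C\lambda^k$ and $C\lambda^j$, and deduce that $R_n$ converges to an absolutely convergent series. The paper instead estimates $I_n-I_m$ directly by a specific pairing: for $0\le i<a_m$ it compares $\gamma_V^{\,i-a_n}x_n=[V^iV_n\cdots]$ with $\gamma_V^{\,i-a_m}x_m=[V^iV_m\cdots]$ (agreement to depth $m$ from the hypothesis on $V_n,V_{n+1}$), and for $a_m\le i<a_n$ it compares $\gamma_V^{\,i-a_n}x_n=[V^iV_n\cdots]$ with $v=[\overline V]$ (agreement to depth $\ge a_m$). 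Via \cref{lem:triv_eval_integral} this yields $|I_n-I_m|=O(a_m2^{-m})+O((a_n-a_m)2^{-a_m})$, and the hypothesis $2^{-n}a_n\to 0$ kills the first term. Your route is a bit longer but makes the role of the two agreement hypotheses (on $V_n$ and on $W_n$) and of the growth condition $2^{-n}a_n\to 0$ individually visible; the paper's pairing is more economical and avoids introducing the limit series.
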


\begin{proof}
	Take positive integers $ m < n $ with $ a_m < a_n $. 
	We have
	\[
	I_n - I_m =
	\int_{z_0}^{\gamma_{V}^{} z_0} j \left( 
	\sum_{0 \le i < a_n} 
	\eta_{\gamma_{V}^{i-a_n} x'_n, \gamma_{V}^{i-a_n} x_n}^{} 
	- \sum_{0 \le i < a_m} 
	\eta_{\gamma_{V}^{i-a_m} x'_m, \gamma_{V}^{i-a_m} x_m}^{} 
	+ (a_n - a_m) \eta_{v', v}
	\right).
	\]
	Let
	\begin{align}
		S_1(m, n; z) &:=
		\sum_{0 \le i < a_m} \left( 
		\frac{1}{z - \gamma_{V}^{i-a_n} x_n} 
		- \frac{1}{z - \gamma_{V}^{i-a_m} x_m} 
		\right), \\
		S'_1(m, n; z) &:=
		\sum_{0 \le i < a_m} \left( 
		\frac{1}{z - \gamma_{V}^{i-a_n} x'_n} 
		- \frac{1}{z - \gamma_{V}^{i-a_m} x'_m} 
		\right), \\
		S_2(m, n; z) &:=
		\sum_{a_m \le i < a_n} \left( 
		\frac{1}{z - \gamma_{V}^{i-a_n} x_n} 
		- \frac{1}{z - v} 
		\right), \\
		S'_2(m, n; z) &:=
		\sum_{a_m \le i < a_n} \left( 
		\frac{1}{z - \gamma_{V}^{i-a_n} x'_n} 
		- \frac{1}{z - v'} 
		\right).
	\end{align}
	Then we can write
	\[
	I_n - I_m =
	\int_{z_0}^{\gamma_{V}^{} z_0} j \left( 
	- S_1(m, n; z) + S'_1(m, n; z) + S_2(m, n; z) - S'_2(m, n; z)
	\right) dz.
	\]
	For each $ 0 \le i < a_m $, since
	\begin{alignat}{2}
		\gamma_{V}^{i-a_n} x_n &= [V^i V_n \cdots], \quad &
		\gamma_{V}^{i-a_m} x_m &= [V^i V_m \cdots], \\
		\gamma_{V}^{i-a_n} x'_n &= -[0, V'^i W_n \cdots], \quad &
		\gamma_{V}^{i-a_m} x'_m &= -[0, V'^i W_m \cdots],
	\end{alignat}
	we have
	\[
	\abs{\gamma_{V}^{i-a_n} x_n - \gamma_{V}^{i-a_m} x_m}
	\le 2^{2- m}, \quad
	\abs{\gamma_{V}^{i-a_n} x'_n - \gamma_{V}^{i-a_m} x'_m}
	\le 2^{2- m}.
	\]
	For each $ a_m \le i < a_n $, since
	\[
	\gamma_{V}^{i-a_n} x_n = [V^i V_n \cdots], \quad
	\gamma_{V}^{i-a_n} x'_n = -[0, V'^i W_n \cdots],
	\]
	we have
	\[
	\abs{\gamma_{V}^{i-a_n} x_n - v}
	\le 2^{2- a_m}, \quad
	\abs{\gamma_{V}^{i-a_n} x'_n -v'}
	\le 2^{2- a_m}.
	\]
	By \cref{lem:triv_eval_integral}, we obtain
	\[
	\abs{I_n - I_m} 
	= \sum_{0 \le i < a_m} O(2^{-m})
	+ \sum_{a_m \le i < a_n} O(2^{-a_m})
	= O(a_m 2^{-m}) + O((a_n - a_m)2^{-a_m}).
	\]
	Since this converges to 0 as $ m \to \infty $ by the assumption, $ \{ I_n \} $ is a Cauchy sequence. 
\end{proof}

\begin{proof}[Proof of $ \cref{thm:main:realize_lim} $]
	Keep the notation in \cref{thm:main:realize_lim} and let 
	\[
	L := \lim_{n \to \infty} \frac{\abs{U_1} + \cdots + \abs{U_n}}{A_n}.
	\]
	It suffices to show that
	\[
	L \widehat{\val}(x)
	= a_1 \widehat{\val}(W_1) + \dots + a_k \widehat{\val}(W_k).
	\]
	Take any real number $ x' < -1 $ and for positive integers $ n $ and $ 1 \le i \le k $, let
	\begin{align}
		x_n &:= \gamma_{U_1 \cdots U_{n-1}}^{-1} x
		=[U_n U_{n+1} \cdots], \\
		x'_n &:= \gamma_{U_1 \cdots U_{n-1}}^{-1} x'
		= -[0, U'_{n-1} \cdots U'_1, \delta_0 x'], \\
		x_{i, n} &:= \gamma_{V_1 W_1^{a_{1, n}} V_2 W_2^{a_{2, n}} \cdots V_{i-1} W_{i-1}^{a_{i-1, n}}}^{-1} x_n
		=[V_{i} W_{i}^{a_{i, n}} \cdots V_{k} W_{k}^{a_{k, n}} U_{n+1} U_{n+2} \cdots], \\
		x'_{i, n} &:= \gamma_{V_1 W_1^{a_{1, n}} V_2 W_2^{a_{2, n}} \cdots V_{i-1} W_{i-1}^{a_{i-1, n}}}^{-1} x_n
		= -[0, {W'}_{i-1}^{a_{i-1, n}} V'_{i-1} \cdots {W'}_1^{a_{1, n}} V'_1   
		U'_{n-1} \cdots U'_1, \delta_0 x'].
	\end{align}
	Since $ x' < -1 $, the far right-hand sides in the second and fourth rows are continued fraction expansions.
	Then we have
	\begin{align}
		L \widehat{\val}(x)
		&=  \lim_{ n \to \infty } \frac{1}{A_n}
		\sum_{1 \le m \le n} 
		\int_{\gamma_{U_1 \cdots U_{m-1}}^{} z_0}
		^{\gamma_{U_1 \cdots U_m}^{} z_0} 
		j \eta_{x', x}^{} \\
		&=  \lim_{ n \to \infty } \frac{1}{A_n}
		\sum_{1 \le m \le n} 
		\int_{z_0}^{\gamma_{U_m}^{} z_0} 
		j \eta_{x'_m, x_m}^{} \\
		&=  \lim_{ n \to \infty } \frac{1}{A_n} 
		\sum_{1 \le m \le n} 
		\sum_{1 \le i \le k} 
		\left(  
		\int_{z_0}^{\gamma_{V_{i}}^{} z_0} 
		j \eta_{x'_{i, m}, x_{i, m}}^{}
		+ \int_{z_0}^{W_{i}^{a_{i, m}} z_0} 
		j \eta_{\gamma_{V_{i}}^{-1} x'_{i, m}, 
			\gamma_{V_{i}}^{-1} x_{i, m}}^{}
		\right).
	\end{align}
	Since we have
	\[
	\abs{ x_{i, m} - \gamma_{V_{i}}^{} w_i } \le 2^{2- a_{i, m}}, \,
	\abs{ x'_{i, m} - \gamma_{w'_{i-1}} } \le 2^{2- a_{i-1, m}}
	\]
	by continued fraction expansion, it follows from \cref{lem:triv_eval_integral} that
	\[
	\int_{z_0}^{\gamma_{V_{i}}^{} z_0} 
	j \left( \eta_{x'_{i, m}, x_{i, m}}^{} - \eta_{w'_{i-1}, \gamma_{V_{i}}^{} w_i}^{} \right)
	= O(2^{- a_{i, m}} + 2^{- a_{i-1, m}}).
	\]
	Since
	\begin{align}
		\gamma_{V_{i}}^{-1} x_{i, m} &= 
		[W_{i}^{a_{i, m}} \cdots V_{k} W_{k}^{a_{k, m}} U_{n+1} U_{n+2} \cdots], \\
		\gamma_{V_{i}}^{-1} x'_{i, m} &= 
		-[0, V'_{i} {W'}_{i-1}^{a_{i-1, m}} V'_{i-1} \cdots {W'}_1^{a_{1, m}} V'_1 U'_{m-1} \cdots U'_1, \delta_0 x'],
	\end{align}
	we have
	\[
	\int_{z_0}^{W_{i}^{a_{i, m}} z_0} 
	j \eta_{\gamma_{V_{i}}^{-1} x'_{i, m}, 
		\gamma_{V_{i}}^{-1} x_{i, m}}^{}
	= a_{i, m} N(W_i) \widehat{\val}(w_i) + O(1)	
	\]
	by \cref{lem:BI_eval_integral}.
	Thus, we obtain
	\begin{align}
		L \widehat{\val}(x)
		&=  \lim_{ n \to \infty } \frac{1}{A_n} 
		\sum_{1 \le m \le n} 
		\sum_{1 \le i \le k} 
		a_{i, m} N(W_i) \widehat{\val}(w_i) \\
		&= \sum_{1 \le i \le k} 
		a_i N(W_i) \widehat{\val}(w_i).
	\end{align}
\end{proof}


\section{The value of $ \val $ function at the Thue--Morse word} \label{sec:Thue--Morse}


In this section, we calculate the value of $ \val $ function at the Thue--Morse word and prove \cref{thm:Thue--Morse}.

We fix two even words $ V $ and $ W $.
Let $ h \colon \{ V, W \}^* \to \{ V, W \}^* $ be a monoid homomorphism such that $ h(V) := VW, h(W) := WV $ and $ V_h := \lim_{n \to \infty} h^n(V) $ be the Thue--Morse word.
For a word $ U = U_1 \cdots U_n \in \{ V, W \}^* $ with $ U_1, \dots, U_n \in \{ V, W \} $, let $ U' := U_n \cdots U_1 $.

To begin with, we give a simple representation of the Thue--Morse word.

\begin{lem} \label{lem:Thue--Morse}
	Let $ \tau \colon \{ V, W \}^* \to \{ V, W \}^* $ be a monoid homomorphism such that $ \tau(V) := W $ and $ \tau(W) := V $.
	Then the following statements hold.
	\begin{enumerate}
		\item \label{item:lem:Thue--Morse1} For any positive integer $ n $, we have $ h^n(V) = h^{n-1}(V) \tau h^{n-1}(V) $.
		\item \label{item:lem:Thue--Morse2} $ h^{2n}(V)' = h^{2n}(V) $.
	\end{enumerate}
\end{lem}

\begin{proof}
	The first claim \ref{item:lem:Thue--Morse1} follows from the facts that $ h \circ \tau = \tau \circ h $ and $ h^2(U) = h(U) \tau h(U) $ for a word $ U \in \{ V, W \}^* $ such that $ h(U) = U \tau(U) $.
	The last claim \ref{item:lem:Thue--Morse2} follows from the fact that $ h^2(U)' = h^2(U) $ for a word $ U \in \{ V, W \}^* $ such that $ h(U) = U \tau(U) $ and $ U' =U $.
\end{proof}

\begin{proof}[Proof of $ \cref{thm:Thue--Morse} $]
	Let $ V_n := h^n(V) $.
	Since $ [V_n]' = -[0, V_n] $ by \cref{lem:Thue--Morse} \ref{item:lem:Thue--Morse2}, we have
	\[
	\widehat{\val}([V_h]) = \lim_{n \to \infty} \frac{1}{2^{2n}} 
	\int_{z_0}^{\gamma_{V_n}^{} z_0} j \eta_{-[0, V_h], V_h}^{}, \quad
	\widehat{\val}([V_n]) = 
	\int_{z_0}^{\gamma_{V_n}^{} z_0} j \eta_{-[0, V_n], V_n}^{}.
	\]
	Here the first $ 2^{2n} $ terms of $ V_h $ are $ V_n $ by \cref{lem:Thue--Morse} \ref{item:lem:Thue--Morse1}.
	Thus,  we obtain the claim. 
\end{proof}


\bibliographystyle{alpha}
\bibliography{myrefs_val_extension}

\end{document}